\setlist[enumerate]{label={\upshape (\roman*)}}
\theoremstyle{plain}
\newtheorem{theorem}{\indent\sc Theorem}[section]
\newtheorem{lemma}[theorem]{\indent\sc Lemma}
\newtheorem{corollary}[theorem]{\indent\sc Corollary}
\newtheorem{proposition}[theorem]{\indent\sc Proposition}
\theoremstyle{definition}
\newtheorem{definition}[theorem]{\indent\sc Definition}
\newtheorem{remark}[theorem]{\indent\sc Remark}
\newtheorem{example}[theorem]{\indent\sc Example}
\newcommand{\cE}{\mathcal{E}} 
\newcommand{\cF}{\mathcal{F}}
\newcommand{\G}{\mathbb{G}}
\newcommand{\E}{\mathbb{E}}
\newcommand{\bH}{\mathbf{H}}
\newcommand{\Ao}{A_{\omega}}
\newcommand{\Bo}{B_{\omega}}
\begin{document}

\title[Markov properties for Gaussian fields]{Markov properties for Gaussian fields associated with Dirichlet forms}
\author[T. Ooi]{Takumu Ooi}\thanks{Research Institute for Mathematical Sciences, Kyoto University, Kyoto, 606-8502, JAPAN. JSPS Research Fellow (DC1).\ E-mail:ooitaku@kurims.kyoto-u.ac.jp}
\begin{abstract}
We prove the equivalence of the local property for an irreducible regular Dirichlet form and the Markov property for the Gaussian field associated with the Dirichlet form. Moreover we introduce a strong Markov property for Gaussian fields and present some sufficient conditions for this to hold.
\end{abstract}

\maketitle

\section{Introduction}\label{intro}
The Markov property is an important property for stochastic processes. Intuitively, it is the property that the future state of the process is independent of the past state given the present state. The Markov property has also been introduced for various other stochastic models. In studies of random fields, for example, these include L\'{e}vy's \(n\)-parameter Brownian motion \cite{Mc}, Gaussian processes with a multidimensional parameter \cite{P}, and random fields associated with reproducing kernel Hilbert spaces \cite{KM}.

Markov properties for Gaussian free field (GFF, in abbreviation) has been used in literatures. Nelson (\cite{N}) proved the (massive) GFF enjoys the Markov property. For a domain \(D\subset \mathbb{R}^d\), the GFF on \(D\) is the complete linear space of Gaussian random variables indexed by \(H_0^1(D)\) whose means are zero and covariances are given by the Dirichlet inner products, where \(H_0^1(D)\) is the completion of the space of all continuous functions with compact support in \(D\) by the Dirichlet inner product. A massive GFF is what is obtained when the covariance is given by the sum of the Dirichlet inner product and the constant multiple of \(L^2\)-inner product. GFFs can be considered on many other spaces and play an important role in the theory of random surfaces, quantum field theory, statistical physics, and amongst other areas. See \cite{BP} and \cite{S} for details. The Markov property for GFFs is applied in these areas, for example, to establish a reflection positivity also known as Osterwalder-Schrader positivity in physics (\cite{N2}), to establish a sewing operation for manifolds (\cite{Di}), and to construct a coupling between GFFs and occupation times of random interlacements on graphs (\cite{Sz}). Another property that is often considered is the domain Markov property (\cite{DS},\cite{HMP},\cite{SS}, etc.). This property is different from the Markov property treated in this paper, and we will note the relation between them in Remark \ref{Fukurem}.

As noted above, the covariance of the GFF is given by the Dirichlet inner product, which is a regular Dirichlet form. The definition of a GFF may be generalized to Gaussian fields whose covariances are Dirichlet forms. Before we consider these Gaussian fields, we give definitions and introduce some properties of Dirichlet forms.

Let \(E\) be a locally compact separable metric space and \(m\) be a positive Radon measure satisfying \({\rm supp}(m)=E\). Throughout this paper, for a given Borel set \(A\), \(C_c(A)\) denotes the family of all continuous functions with compact support contained in \(A\). Moreover, \(a\wedge b\) means \(\min \{a,b\}\) and \(a\vee b\) means \(\max \{a,b\}\).
\begin{definition}
Let \(\cF\) be a dense linear subspace of \(L^2(E;m)\) and \(\cE:\cF\times \cF\to \mathbb{R}\) be a non-negative definite symmetric bilinear form. The pair \((\cE,\cF)\) is called a \emph{Dirichlet form} on \(L^2(E;m)\) if the following conditions hold.\\
\((1)\) The space \(\cF\) is complete with respect to the norm \(\sqrt{\cE_1(\cdot, \cdot)}\), where \(\cE_1\) is the sum of \(\cE\) and the \(L^2(E;m)\) inner product.\\
\((2)\) For any \(f\in \cF\), it holds that \(0\vee f \wedge 1\in \cF \) and \(\cE(0\vee f \wedge 1,0\vee f \wedge 1)\leq \cE(f,f)\).\\
Moreover, \((\cE,\cF)\) is \emph{regular} if \(C_c(E)\cap \cF\) is \(\cE_1\)-dense in \(\cF\) and uniformly dense in \(C_c(E)\).
\end{definition}

Let \((\cE, \cF)\) be a regular Dirichlet form on \(L^2(E;m)\). Then there exists an \(m\)-symmetric Markov process \((\{Z_t\}_{t\geq 0}, \{\mathbb{P}_x\}_{x\in E})\) on \(E\) associated with \((\cE,\cF)\) (see \cite{FOT} or \cite{CF}, for details). We say \((\cE, \cF)\) is \emph{irreducible} (resp.\ \emph{transient, recurrent}) if its associated process is irreducible (resp.\ transient, recurrent).

\begin{definition}
A Dirichlet form \((\cE,\cF)\) on \(L^2(E;m)\) is \emph{local} if \(\cE(u,v)=0\) for all \(u,v \in \cF\) having disjoint compact support. 
\end{definition}
It is known that a regular Dirichlet form is said to be local if and only if its associated Markov process has continuous paths, namely the process is a diffusion (see \cite{FOT} or \cite{CF}, for details).\\

We now define \emph{Gaussian fields associated with Dirichlet forms}, which are the main objects in this paper. For a regular Dirichlet form \((\cE,\cF)\) on \(L^2(E;m)\), there exists a Gaussian field \(\G(\cE):=\{X_f\}_{f\in \cF_e}\) defined on a probability space \((\Omega, \mathscr{M}, \mathbb{P})\) satisfying \(\E(X_f)=0\) and \(\E(X_fX_g)=\cE(f,g)\) for \(f,g\in \cF_e\), see \cite{FO2} and \cite{Dud}. Here, \(\cF_e\) is the extended Dirichlet space of \(\cF\), more precisely, \(\cF_e\) is defined as the family of functions \(f:E\to \mathbb{R}\) having an \(\cE\)-Cauchy sequence \(\{f_n\}\subset \cF\) such that \(f_n\to f\) \(m\)-a.e. We call \(\G(\cE)\) the Gaussian field on \((\Omega,\mathscr{M},\mathbb{P})\) associated with the Dirichlet form \((\cE,\cF)\) on \(L^2(E;m)\). 

Throughout this paper, we may assume that \(\mathscr{M}\) is complete and \(\mathscr{N}\) denotes the collection of all \(\mathbb{P}\)-null sets. For \(\G(\cE)\), we define the \(\sigma\)-fields that will correspond to the filtrations of the standard Markov property for stochastic processes, and the Markov property as follows. 

\begin{definition}
For \(f\in \cF_e\), we define the \emph{spectrum} \(s(f)\) of \(f\) as the complement of the largest open set \(U\) satisfying \(\cE(f,g)=0\) for all \(g\in C_c(E)\cap \cF\) with \({\rm supp}(g)\subset U\).

For \(A\subset E\), we set \(\sigma(A)\subset \mathscr{M}\) to be the \(\sigma\)-field generated by \(\{X_f: f\in \cF_e,\ s(f)\subset A\}\) and \(\mathscr{N}\).
\end{definition}

\begin{definition}
For a set \(A\subset E\), \(\G(\cE)\) has \emph{the Markov property} with respect to \(A\) if \(\sigma (\overline{A^c})\) is conditionally independent of \(\sigma (\overline{A})\) given \(\sigma (\partial A)\), which means that \(\mathbb{P}(\Gamma \cap \Sigma|\sigma(\partial A))=\mathbb{P}(\Gamma |\sigma(\partial A))\mathbb{P}(\Sigma|\sigma(\partial A))\) for \(\Gamma \in \sigma(\overline{A})\) and \(\Sigma \in \sigma(\overline{A^c}).\)
\end{definition}

The following theorem is the first main theorem of this paper concerning the Markov property.

\begin{theorem}\label{mainmain}
Let \((\cE,\cF)\) be an irreducible regular Dirichlet form on \(L^2(E;m)\). Then the following are equivalent:\\
\((1)\) The Dirichlet form \((\cE,\cF)\) is local;\\
\((2)\) The Gaussian field \(\G(\cE)\) has the Markov property with respect to any relatively compact open set;\\
\((3)\) The Gaussian field \(\G(\cE)\) has the Markov property with respect to any open set;\\
\((4)\) The Gaussian field \(\G(\cE)\) has the Markov property with respect to any subset of \(E\).
\end{theorem}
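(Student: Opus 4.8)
The plan is to recast the whole statement as a question about orthogonal projections in the first Wiener chaos $\mathcal H$ of $\G(\cE)$, which, via $X_f\leftrightarrow f$, is the $\cE$-completion of the extended Dirichlet space $(\cF_e,\cE)$ (in the recurrent case one first passes to the quotient by the one-dimensional $\cE$-kernel $\mathbb R\cdot 1$). For an open set $U$ write $\cF^0_U$ for the $\cE$-closure of $C_c(U)\cap\cF$ in $\mathcal H$, and for a closed set $F$ let $\mathcal M(F)\subset\mathcal H$ be the closed subspace corresponding to $\{X_f:s(f)\subset F\}$. Unwinding the definition of the spectrum gives, for any regular Dirichlet form, the duality $\mathcal M(F)=(\cF^0_{F^c})^{\perp}$, because $s(f)\subset F$ is precisely the $\cE$-orthogonality of $f$ to every $g\in C_c(F^c)\cap\cF$. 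I will combine this with the standard fact that, for a jointly Gaussian family, conditional independence of $\sigma(\overline A)$ and $\sigma(\overline{A^c})$ given $\sigma(\partial A)$ is equivalent to the splitting relation $\big(\mathcal M(\overline A)\ominus\mathcal M(\partial A)\big)\perp\mathcal M(\overline{A^c})$ (legitimate since $\mathcal M(\partial A)\subset\mathcal M(\overline A)\cap\mathcal M(\overline{A^c})$), and with a ``partition identity'' $\mathcal M(\partial A)=\mathcal M(\overline A)\cap\mathcal M(\overline{A^c})$: the latter holds for every regular Dirichlet form because $(\partial A)^c=(\overline A)^c\cup\mathrm{int}\,A$ is a disjoint union of open sets, and given $g\in C_c((\partial A)^c)\cap\cF$ with $\operatorname{supp}g=K_1\sqcup K_2$ split along this decomposition one writes $g=\chi g+(1-\chi)g$ for a cut-off $\chi\in C_c((\overline A)^c)\cap\cF$ with $\chi\equiv 1$ on $K_1$ (available by regularity), both summands lying in $\cF$ because $\cF\cap L^\infty$ is an algebra. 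Since $(4)\Rightarrow(3)\Rightarrow(2)$ is trivial, it remains to prove $(1)\Rightarrow(4)$ and $(2)\Rightarrow(1)$.

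For $(1)\Rightarrow(4)$, assuming $\cE$ is local we have $\cE(\varphi,\psi)=0$ whenever $\varphi,\psi$ have disjoint compact supports, so $\cF^0_{U_1}\perp\cF^0_{U_2}$ for disjoint open $U_1,U_2$; in particular $\cF^0_{(\overline A)^c}\perp\cF^0_{\mathrm{int}\,A}$, and with $W:=\big(\cF^0_{(\overline A)^c}\oplus\cF^0_{\mathrm{int}\,A}\big)^{\perp}$ one obtains the orthogonal decomposition $\mathcal H=\cF^0_{(\overline A)^c}\oplus\cF^0_{\mathrm{int}\,A}\oplus W$. Hence $\mathcal M(\overline A)=\cF^0_{\mathrm{int}\,A}\oplus W$, $\mathcal M(\overline{A^c})=\cF^0_{(\overline A)^c}\oplus W$, and (by the partition identity together with this decomposition) $\mathcal M(\partial A)=W$, so $\mathcal M(\overline A)\ominus\mathcal M(\partial A)=\cF^0_{\mathrm{int}\,A}$, which is orthogonal to $\cF^0_{(\overline A)^c}\oplus W=\mathcal M(\overline{A^c})$. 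This is exactly the splitting relation, so $\G(\cE)$ has the Markov property with respect to the arbitrary set $A$.

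For $(2)\Rightarrow(1)$, suppose $\G(\cE)$ has the Markov property with respect to every relatively compact open set but $\cE$ is not local; then (after a routine reduction to $C_c\cap\cF$) there are $u,v\in C_c(E)\cap\cF$ with disjoint compact supports and $\cE(u,v)\neq 0$. Separating these supports, choose a relatively compact open $G$ with $\operatorname{supp}u\subset G$ and $\overline G\cap\operatorname{supp}v=\emptyset$, so that $u\in\cF^0_G$ and $v\in\cF^0_{(\overline G)^c}$. The splitting relation for $G$, combined with the partition identity $\mathcal M(\partial G)=\mathcal M(\overline G)\cap\mathcal M(\overline{G^c})$, is equivalent to the commutation $P_{\mathcal M(\overline G)}P_{\mathcal M(\overline{G^c})}=P_{\mathcal M(\overline G)\cap\mathcal M(\overline{G^c})}$; since $\mathcal M(\overline{G^c})^{\perp}=\cF^0_G$ and $\mathcal M(\overline G)^{\perp}=\cF^0_{(\overline G)^c}$, a short computation with these commuting projections shows $P_{\cF^0_G}v\in\cF^0_G\cap\cF^0_{(\overline G)^c}$. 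But $\cF^0_{U_1}\cap\cF^0_{U_2}=\cF^0_{U_1\cap U_2}$---both sides being the functions of $\cF_e$ vanishing quasi-everywhere off $U_1\cap U_2$---so $\cF^0_G\cap\cF^0_{(\overline G)^c}=\cF^0_\emptyset=\{0\}$ and therefore $P_{\cF^0_G}v=0$; consequently $\cE(u,v)=\langle u,v\rangle_{\cE}=\langle u,P_{\cF^0_G}v\rangle_{\cE}=0$, a contradiction. Hence $\cE$ is local, closing the cycle $(1)\Rightarrow(4)\Rightarrow(3)\Rightarrow(2)\Rightarrow(1)$.

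The steps I expect to require the most care are not the Hilbert-space geometry above but the supporting facts about Dirichlet spaces feeding into it: that $\cF\cap L^\infty$ is an algebra and that regular Dirichlet forms carry Urysohn-type cut-offs in $C_c(E)\cap\cF$ with an accompanying Leibniz-type energy estimate (needed both for the partition identity and for the reduction to $C_c\cap\cF$ in $(2)\Rightarrow(1)$), the identifications $\cF^0_U=\{f\in\cF_e:f=0\text{ q.e.\ on }U^c\}$ and $\cF^0_{U_1}\cap\cF^0_{U_2}=\cF^0_{U_1\cap U_2}$, and---the genuinely delicate point---handling the recurrent case uniformly, where $(\cF_e,\cE)$ is only a pre-Hilbert space and the entire discussion must be carried out in the quotient by the constants; here irreducibility is what guarantees the $\cE$-kernel is exactly $\mathbb R\cdot 1$, so that the extended Dirichlet space, the spectra, and the $\sigma$-fields $\sigma(A)$ behave as the argument requires. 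One should also check at the outset that the spectrum is well defined, i.e.\ that the union of all open $U$ with $\cE(f,g)=0$ for $g\in C_c(E)\cap\cF$ supported in $U$ again has this property; this follows from the same cut-off/partition argument used for the partition identity.
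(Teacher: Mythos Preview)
Your approach is correct and genuinely different from the paper's. The paper proves $(1)\Rightarrow(2)\Rightarrow(3)$ in the recurrent case by working with hitting distributions $\mathbf H_{\overline A}f$: it passes to a transient part form on an auxiliary open set $U\supset\overline A$, invokes R\"ockner's transient result there, and then patches back using locality and cut-offs; the extension to $(4)$ goes through a separate intermediate statement $(3^*)$ (Theorem~2.4 and Lemma~2.5), and $(2)\Rightarrow(1)$ is simply cited from \cite{FO2}. By contrast, you argue entirely inside the first Wiener chaos via the duality $\mathcal M(F)=(\cF^0_{F^c})^\perp$ and the partition identity $\cF^0_{(\partial A)^c}=\overline{\cF^0_{(\overline A)^c}+\cF^0_{\mathrm{int}\,A}}$, which under locality becomes an orthogonal direct sum and immediately yields $(1)\Rightarrow(4)$ for arbitrary $A$ in one stroke; your $(2)\Rightarrow(1)$ via commuting projections is likewise self-contained. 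The gain of your route is that it is unified and avoids both the probabilistic machinery and the reduction to prior literature; the paper's route, on the other hand, makes the recurrent case completely explicit, whereas you correctly flag but do not fully execute the passage to the quotient $\cF_e/\mathbb R\cdot 1$.

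Two points deserve a little more care than you give them. First, in the recurrent case your claim $\cF^0_G\cap\cF^0_{(\overline G)^c}=\{0\}$ (in the quotient) is not automatic from $G\cap(\overline G)^c=\emptyset$ alone: one must use irreducibility to rule out $\partial G$ being polar, since otherwise an element of the intersection could correspond to a nontrivial locally constant function; this is exactly where the hypothesis of irreducibility enters your argument. Second, the general identity $\cF^0_{U_1}\cap\cF^0_{U_2}=\cF^0_{U_1\cap U_2}$ that you quote can fail in the quotient when $U_1\cup U_2$ has polar complement, though your only use of it is for $U_1\cap U_2=\emptyset$, where the preceding remark suffices. With these caveats addressed, your proof stands as a cleaner alternative to the paper's.
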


This equivalence was proved by Dynkin \cite{D} when \((\cE,\cF)\) is transient under an absolutely continuous condition \((AC)\), which is the condition that there exists a Borel properly exceptional set \(N\subset E\) such that the transition semigroup \(P_t(x,\cdot)\) is absolutely continuous with respect to \(m\) for each \(t>0\) and \(x\in N^c\). Moreover it was proved by R\"{o}ckner \cite{R} when \((\cE,\cF)\) is transient without the assumption \((AC)\). When \((\cE,\cF)\) is recurrent, under the assumption \((AC)\), Fukushima and Oshima \cite{FO} proved the equivalence between \((1),(2)\) and \((3)\). Moreover, in \cite{FO2}, it is proved that \((3)\) implies \((1)\) when \((\cE,\cF)\) is recurrent without the assumption \((AC)\) (in fact, it holds that \((2)\) implies \((1)\) by using their proof).\\
So, in order to complete the proof of Theorem \ref{mainmain}, it is enough to show that \((1)\) implies \((3)\) and \((1)\) is equivalent to \((4)\) when \((\cE,\cF)\) is recurrent without the assumption \((AC\)). We prove these in Section \ref{Markov}.\\

We next consider the strong Markov property. The strong Markov property for stochastic processes has been discussed extensively in the literatures. Moreover strong Markov properties for set-indexed processes where stopping times are replaced by ``stopping sets" are studied in \cite{B, E1, E2, EO, K}.
For the case of the Gaussian field associated with a Dirichlet form \((\cE,\cF)\), if \((\cE, \cF)\) is strongly recurrent, \(\G(\cE)\) can be viewed as a set-indexed field. Hence we can apply the strong Markov property of the set-indexed process to \(\G(\cE)\) in this case. Indeed, Sznitman \cite{Sz} uses the strong Markov property for GFFs on graphs in order to construct a coupling between GFFs and the occupation times of random interlacements on graphs.
However, if \((\cE, \cF)\) is not strongly recurrent, such as in the case of GFFs on sets of \(\mathbb{R}^d\) for \(d\geq 2\), we can not define \(\G(\cE)\) as a random function. Hence we can not apply the strong Markov property for the set-indexed process to \(\G(\cE)\) in general. So we will introduce the strong Markov property for \(\G(\cE)\) on \((\Omega, \mathscr{M}, \mathbb{P})\) and state some sufficient conditions for the strong Markov property to hold.

Recall that \(\G(\cE)\) has the Markov property if \(\sigma(\overline{A})\) is conditionally independent of \(\sigma(\overline{A^c})\) given \(\sigma(\partial A)\). We will generalize these conditions to random sets.

Let \(F(E)\) be the collection of all closed sets in \(E\). We consider a \(\sigma\)-field on \(F(E)\) given by \(\sigma(\{A\in F(E):A\subset U\}:U\subset E{\rm \ is \ open})\). Next we introduce a random set \(\Ao:\Omega \to F(E)\), which is measurable. We call \(\Ao\) a \emph{stopping set} if \(\Ao\) is a random set that satisfies \(\{\Ao\subset A\}\in \sigma(A)\) for any closed set \(A.\) We define filtrations of stopping sets and the strong Markov property as follows.

\begin{definition}\label{defstrongfil}
For random sets \(\Ao, \Bo\) satisfying \(\Bo\subset \Ao\), if \(\Ao\) and \(\overline{{\Bo}^c}\) are stopping sets, we define \(\sigma\)-fields as follows.\\
\[\sigma(\Ao):=\sigma(\{\Gamma\in \mathscr{M}\ ;\ \Gamma \cap \{\Ao\subset A\} \in \sigma(A) {\rm \ for \ any\ closed\ set\ }A\}),\]
\[\sigma(\overline{{\Bo}^c}):=\sigma(\{\Gamma\in \mathscr{M}\ ;\ \Gamma \cap \{\overline{{\Bo}^c}\subset A\} \in \sigma(A) {\rm \ for \ any\ closed\ set\ }A\}),\]
\[\sigma(\Ao\cap \overline{{\Bo}^c}):=\sigma(\Ao)\cap\sigma(\overline{{\Bo}^c}).\]
\end{definition}

\begin{remark}
The above definition is a generalization of the deterministic case. Suppose that \(\Ao=D_1\) for a deterministic closed set \(D_1.\) For any \(\Gamma \in \sigma(\Ao)\), we have \(\Gamma=\Gamma \cap \{\Ao=D_1\subset D_1\}\in \sigma(D_1)\), and so we obtain \(\sigma(\Ao)\subset \sigma(D_1).\) Conversely, for \(\Gamma \in \sigma(D_1),\) if \(D_1\subset A\), \(\Gamma \cap \{\Ao\subset A\}\) is equal to \(\Gamma\). Otherwise it is the empty set, and so it holds that \(\Gamma \cap \{\Ao\subset A\}\in \sigma(A)\) for any closed set \(A\). Hence we have \(\sigma(\Ao)=\sigma(D_1).\) If \(\overline{{\Bo}^c}=D_2\) for a deterministic closed set \(D_2\), \(\sigma(\overline{{\Bo}^c})=\sigma(D_2)\) follows by the same argument. Moreover we have \(\sigma(\Ao\cap \overline{{\Bo}^c})=\sigma(\Ao)\cap\sigma(\overline{{\Bo}^c})=\sigma(D_1)\cap\sigma({D_2})=\sigma(D_1\cap{D_2}).\)
\end{remark}

\begin{definition}
Let \(\Ao\) and \(\Bo\) be random sets satisfying \(\Bo\subset \Ao\) such that \(\Ao\) and \(\overline{{\Bo}^c}\) are stopping sets. \(\G(\cE)\) has \emph{the strong Markov property} with respect to \(\Ao, \Bo\) if \(\sigma (\Ao)\) is conditionally independent of \(\sigma (\overline{\Bo^c})\) given \(\sigma(\Ao\cap \overline{{\Bo}^c})\).
\end{definition}

The following theorem is the second main theorem of this paper and concerns the strong Markov property.
\begin{theorem}\label{mainstrong} For closed random sets \(\Ao, \Bo\) satisfying \(\Bo\subset \Ao\), suppose that \(\{\Ao\subset A,\  \overline{{\Bo}^c}\subset B \}\in \sigma(A\cap B)\) for all closed sets \(A,B\). Then \(\G(\cE)\) has the strong Markov property with respect to \(\Ao, \Bo\).
\end{theorem}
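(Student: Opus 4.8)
The strategy is to reduce the statement to the ordinary Markov property of Theorem~\ref{mainmain} by approximating the stopping sets from outside by simple ones; the hypothesis $\{\Ao\subset A,\ \overline{\Bo^c}\subset B\}\in\sigma(A\cap B)$ is precisely what places the joint atoms of such an approximation in the correct conditioning $\sigma$-field. Note that $(\cE,\cF)$ must be local here: taking $\Ao=\Bo=\overline U$ for a relatively compact regular open $U$ makes the hypothesis trivial and turns the conclusion into the Markov property of $\G(\cE)$ with respect to $U$, which is equivalent to locality by Theorem~\ref{mainmain}. So I work in the local case and use freely: the Markov property of $\G(\cE)$ with respect to every subset of $E$; the identity $\sigma(D_1\cap D_2)=\sigma(D_1)\cap\sigma(D_2)$ for closed $D_1,D_2$ (as in the remark after Definition~\ref{defstrongfil}); the decomposition $\sigma(D_1\cup D_2)=\sigma(D_1)\vee\sigma(D_2)$ for closed $D_1,D_2$, which in the local case follows from a partition of unity applied to the spectra $s(\cdot)$; and the continuity from above $\sigma(A)=\bigcap_k\sigma(D_k)$ for closed $D_k\downarrow A$, which is immediate from the correspondence between the $\sigma$-fields $\sigma(D)$ and the subspaces $\{f\in\cF_e:s(f)\subset D\}$ of $\cF_e$, these being stable under decreasing intersections.

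\emph{Step 1 (a deterministic two-set Markov property).} I claim that if $D,D'$ are closed with $D\cup D'=E$, then $\sigma(D)$ and $\sigma(D')$ are conditionally independent given $\sigma(D\cap D')$. Put $G:=E\setminus D'$; then $G$ is open, $\overline G\subset D$, $E\setminus G=D'$, $\partial G=\overline G\cap D'\subset D\cap D'$ and $D=\overline G\cup(D\cap D')$. The Markov property with respect to $G$ gives that $\sigma(\overline G)$ and $\sigma(D')$ are conditionally independent given $\sigma(\partial G)$; since $\sigma(\partial G)\subset\sigma(D\cap D')\subset\sigma(D')$, the conditioning may be enlarged to $\sigma(D\cap D')$ and then $\sigma(D\cap D')$ adjoined to the first factor, giving that $\sigma(\overline G)\vee\sigma(D\cap D')$ and $\sigma(D')$ are conditionally independent given $\sigma(D\cap D')$; and $\sigma(\overline G)\vee\sigma(D\cap D')=\sigma(D)$ by the decomposition identity.

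\emph{Step 2 (simple stopping sets, then the limit).} Fix a countable base $\{V_i\}$ of $E$, let $\mathcal D$ be the countable family of all finite intersections of the closed sets $E\setminus V_i$ (so that $F=\bigcap\{D\in\mathcal D:F\subset D\}$ for every closed $F$), let $\mathcal D_n$ be its first $n$ members, and define the simple stopping sets $\mathbb{A}_n:=\bigcap\{D\in\mathcal D_n:\Ao\subset D\}$ and $\mathbb{B}_n:=\bigcap\{D\in\mathcal D_n:\overline{\Bo^c}\subset D\}$; these are finitely-valued, with $\mathbb{A}_n\downarrow\Ao$ and $\mathbb{B}_n\downarrow\overline{\Bo^c}$. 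Each joint atom $\Omega_{k,j}=\{\mathbb{A}_n=D_k\}\cap\{\mathbb{B}_n=D'_j\}$ is a finite Boolean combination of events $\{\Ao\subset A,\ \overline{\Bo^c}\subset B\}$, so the hypothesis gives $\Omega_{k,j}\in\sigma(D_k\cap D'_j)$; and since $\Bo\subset\Ao$ forces $\Ao\cup\overline{\Bo^c}=E$, hence $D_k\cup D'_j=E$ on each non-null atom, restriction to $\Omega_{k,j}$ (using $\Omega_{k,j}\in\sigma(D_k\cap D'_j)=\sigma(D_k)\cap\sigma(D'_j)$) makes $\sigma(\mathbb{A}_n)$, $\sigma(\mathbb{B}_n)$, $\sigma(\mathbb{A}_n)\cap\sigma(\mathbb{B}_n)$ coincide there with $\sigma(D_k)$, $\sigma(D'_j)$, $\sigma(D_k\cap D'_j)$, whence Step~1 yields conditional independence on each atom and so on all of $\Omega$. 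Since $\{\Ao\subset D\}=\{\mathbb{A}_n\subset D\}$ for $D\in\mathcal D_n$, continuity from above gives $\bigcap_n\sigma(\mathbb{A}_n)=\sigma(\Ao)$ and $\bigcap_n\sigma(\mathbb{B}_n)=\sigma(\overline{\Bo^c})$, so $\bigcap_n\big(\sigma(\mathbb{A}_n)\cap\sigma(\mathbb{B}_n)\big)=\sigma(\Ao)\cap\sigma(\overline{\Bo^c})=\sigma(\Ao\cap\overline{\Bo^c})$. For bounded $\sigma(\Ao)$-measurable $X$ — which is $\sigma(\mathbb{A}_n)$-measurable for every $n$ since $\Ao\subset\mathbb{A}_n$ — the simple-set case gives $\E[X\mid\sigma(\mathbb{B}_n)]=\E[X\mid\sigma(\mathbb{A}_n)\cap\sigma(\mathbb{B}_n)]$, and letting $n\to\infty$ by the backward martingale convergence theorem gives $\E[X\mid\sigma(\overline{\Bo^c})]=\E[X\mid\sigma(\Ao\cap\overline{\Bo^c})]$, which is the strong Markov property with respect to $\Ao,\Bo$.

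\emph{Main obstacle.} The one substantive ingredient is the decomposition identity $\sigma(D_1\cup D_2)=\sigma(D_1)\vee\sigma(D_2)$ for closed sets used at the end of Step~1: this is where the local property enters in an essential (rather than purely formal) way, as it amounts to splitting a function $f$ with $s(f)\subset D_1\cup D_2$ into summands with spectra in $D_1$ and in $D_2$, which rests on the partition-of-unity / Leibniz calculus available for local Dirichlet forms and is essentially the mechanism already behind Theorem~\ref{mainmain}. Granting this, the remaining ingredients are routine: the reduction to the two-set case, the simple-set case by restriction to atoms, the continuity from above (a Gaussian Hilbert-space fact), and the backward martingale passage to the limit.
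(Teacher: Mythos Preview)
Your approach is essentially the paper's own. Your Step~1 is precisely the paper's Theorem~\ref{main2} (condition $(3^*)$), proved via the same spectrum-splitting identity (Lemma~\ref{lemofmain2}), and your Step~2 reproduces Section~\ref{SMarkov}: the same discrete outer approximation $\Ao^n,\overline{\Bo^c}^n$ built from a countable base (Lemma~\ref{splitn}), the atom-by-atom reduction to the deterministic two-set case (Proposition~\ref{indpendentn}, using Lemma~\ref{lemmaAB} to place the joint atoms in $\sigma(\Ao^n)\cap\sigma(\overline{\Bo^c}^n)$), and the passage to the limit by backward martingale convergence along the decreasing $\sigma$-fields of Corollary~\ref{cordec}.
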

The proof of Theorem \ref{mainstrong} is given in Section \(\ref{SMarkov}\).

As far as we are concerned, this is the first work that discusses the strong Markov property for \(\G(\cE)\). Note that the strong Markov property for GFFs that appeared in \cite{Ar, Se} is a strong version of the domain Markov property (also called the spatial Markov property), and is different from the strong Markov property treated in this paper.

\section{The Markov property}\label{Markov}
In this section, we prove Theorem \ref{mainmain}. We first give some preliminaries. Let \((\cE,\cF)\) be an irreducible regular Dirichlet form on \(L^2(E;m)\). For an open set \(U\), set \(\cF^U:=\{f\in \cF: \tilde{f}=0\  \rm{q.e.\ on}\  \mathit{U^c}\}\) and \(\cF^U_e:=\{f\in \cF_e: \tilde{f}=0\  \rm{q.e.\ on}\ \mathit{U^c}\}\), where \(\tilde{f}\) be quasi-continuous version of \(f\). For a Borel set \(A\), we define the hitting time of \(A\) as \(\tau_A:=\inf \{t>0: Z_t\in A\}\) and the hitting distribution of \(A\) as \(\bH_{A}f(x):=\E_x(f(Z_{\tau_A}))\) for a Borel measurable function \(f\) and \(x\in E\), where \(Z\) is the Hunt process associated with \((\cE,\cF)\) on \(L^2(E;m)\).

By \cite[Theorem 3.4.8.]{CF}, for any \(f\in \cF_e\) and for any Borel set \(A\), \(\bH_A|\tilde{f}|\) is finite q.e. and \(\bH_A\tilde{f}\) is a quasi continuous function in \(\cF_e\) satisfying \(\cE(\bH_A\tilde{f},g)=0\) for any \(g\in \cF^{A^c}_e\).
By \cite[Theorem 2.3.3]{FOT}, for a closed set \(F\) and \(f\in \cF_e\), \(s(f)\subset F\) is equivalent to \(\cE(f,g)=0\) for any \(g\in C_c(F^c)\cap \cF\). Therefore we have \(s(\bH_F\tilde{g})\subset F\) for a closed set \(F\) and \(g\in \cF_e\).

\begin{lemma}\((\)\cite[Lemma 2.1]{FO2}\()\)\label{Flemma1}
For any \(A\subset E\) and \(f\in \cF_e\), it holds that 
\begin{equation}
\E(X_f|\sigma (\overline{A}))=X_{\bH_{\overline{A}}\tilde{f}}. \label{eq:psMar}\end{equation}
\end{lemma}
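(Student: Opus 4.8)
The plan is to exploit the fact that, by the defining relations $\E(X_f)=0$ and $\E(X_fX_g)=\cE(f,g)$, the map $f\mapsto X_f$ is a linear isometry of the (possibly degenerate) inner-product space $(\cF_e,\cE)$ into $L^2(\Omega,\mathscr M,\mathbb P)$ whose closed range is the Gaussian Hilbert space $\mathcal H:=\overline{\{X_f:f\in\cF_e\}}$, so that \eqref{eq:psMar} becomes a statement about an orthogonal projection. Writing $U:=(\overline A)^c$ and $\mathcal S:=\{g\in\cF_e:s(g)\subset\overline A\}$, the $\sigma$-field $\sigma(\overline A)$ is generated by $\{X_g:g\in\mathcal S\}$ together with $\mathscr N$, hence by the closed subspace $K:=\overline{\{X_g:g\in\mathcal S\}}$ and $\mathscr N$. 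I would first record the standard fact for Gaussian Hilbert spaces that, for such a closed subspace $K$, one has $\E(X\mid\sigma(K))=P_KX$, the orthogonal projection: indeed $X-P_KX\perp K$, and joint Gaussianity upgrades this orthogonality to independence of $K$, hence of $\sigma(K)$, so $\E(X-P_KX\mid\sigma(K))=\E(X-P_KX)=0$. Thus the lemma reduces to identifying the $\cE$-orthogonal projection of $f$ onto $\mathcal S$ with $\bH_{\overline A}\tilde f$.

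To carry this out I would verify the two defining properties of the projection. First, $X_{\bH_{\overline A}\tilde f}$ is $\sigma(\overline A)$-measurable: by the preliminary remarks $s(\bH_{\overline A}\tilde f)\subset\overline A$, so $\bH_{\overline A}\tilde f\in\mathcal S$ and $X_{\bH_{\overline A}\tilde f}\in K$. Second, I must show that the residual $u:=f-\bH_{\overline A}\tilde f$ is $\cE$-orthogonal to $\mathcal S$, equivalently that $X_u=X_f-X_{\bH_{\overline A}\tilde f}$ is $L^2$-orthogonal to every generator $X_g$, $g\in\mathcal S$. The key geometric identity here is $\mathcal S=(\cF_e^{U})^{\perp}$: by the spectrum characterization (\cite[Theorem 2.3.3]{FOT}), $g\in\mathcal S$ if and only if $\cE(g,h)=0$ for all $h\in C_c(U)\cap\cF$, and since $\cF_e^{U}$ is the $\cE$-closure of $C_c(U)\cap\cF$ by regularity and the theory of part spaces, $\mathcal S$ is exactly the $\cE$-orthogonal complement of $\cF_e^{U}$.

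Finally I would place $u$ in $\cF_e^{U}$: since $\bH_{\overline A}\tilde f=\tilde f$ q.e.\ on $\overline A$ (q.e.\ points of $\overline A$ are regular for $\overline A$), the quasi-continuous version of $u$ vanishes q.e.\ on $\overline A=U^c$, so $u\in\cF_e^{U}$. Combined with $\mathcal S=(\cF_e^{U})^{\perp}$ this gives $\cE(u,g)=0$ for all $g\in\mathcal S$, i.e.\ $u\perp\mathcal S$. Hence $\bH_{\overline A}\tilde f=P_{\mathcal S}f$, and the Gaussian projection fact yields $\E(X_f\mid\sigma(\overline A))=P_KX_f=X_{\bH_{\overline A}\tilde f}$. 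The main obstacle is the geometric identification $\mathcal S=(\cF_e^{U})^{\perp}$, which rests on matching the spectrum condition against the $\cE$-density of $C_c(U)\cap\cF$ in the part space $\cF_e^{U}$; one must also stay slightly careful in the recurrent case, where $\cE$ is only a seminorm, but the degenerate directions satisfy $X_g=0$ almost surely and cause no harm, since the projection and all inner products descend to $\cF_e/\ker\cE$.
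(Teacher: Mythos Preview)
Your proposal is correct and follows essentially the same approach as the paper. The paper's proof is extremely terse—it simply asserts the key orthogonality $\cE(f-\bH_{\overline{A}}\tilde f,\,g)=0$ for all $g$ with $s(g)\subset\overline A$ and defers to \cite[Lemma 2.1]{FO2}—while you spell out exactly how this orthogonality arises from $f-\bH_{\overline A}\tilde f\in\cF_e^{U}$ together with the identification $\{g:s(g)\subset\overline A\}=(\cF_e^{U})^{\perp}$, and then invoke the standard Gaussian-space fact that conditional expectation equals orthogonal projection; this is precisely the argument behind the cited reference.
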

\begin{proof}
This follows from the fact that \(\cE(f-\bH_{\overline{A}}\tilde{f},g)=0\) for any \(A\) and \(f,g\in \cF_e\) with \(s(g)\subset \overline{A}\). See \cite[Lemma 2.1]{FO2} for the detailed proof.
\end{proof}

\begin{lemma}\(((2.16)\) in \cite{FO2}\()\)\label{Flemma}
\(\G(\cE)\) has the Markov property with respect to \(A\) if and only if the following holds:
\begin{equation}
\sigma (X_{\bH_{\overline{A}}\tilde{f}}: s(f)\subset \overline{A^c})\subset \sigma (\partial A).\label{eq:Markov1}\end{equation}
\end{lemma}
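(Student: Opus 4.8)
The statement to prove is that $\G(\cE)$ has the Markov property with respect to $A$ if and only if $\sigma(X_{\bH_{\overline{A}}\tilde f}: s(f)\subset\overline{A^c})\subset\sigma(\partial A)$. The proof proposal is to reduce everything to a statement about conditional expectations of the single Gaussian variables $X_f$, using the general fact that for jointly Gaussian families, conditional independence given a sub-$\sigma$-field generated by a jointly Gaussian family is equivalent to the orthogonality, after projecting onto that sub-space, of the two families in $L^2(\Omega,\mathbb P)$. Concretely, since $\sigma(\overline{A})$ is generated by a Gaussian family and likewise $\sigma(\overline{A^c})$ and $\sigma(\partial A)$, conditional independence of $\sigma(\overline{A})$ and $\sigma(\overline{A^c})$ given $\sigma(\partial A)$ holds if and only if for every $f,g\in\cF_e$ with $s(f)\subset\overline{A}$ and $s(g)\subset\overline{A^c}$ one has
\[
\E\big[(X_f-\E[X_f\mid\sigma(\partial A)])(X_g-\E[X_g\mid\sigma(\partial A)])\big]=0.
\]

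First I would record this Gaussian conditional-independence criterion precisely (it is standard; one may cite it or give the one-line Hilbert-space argument: the conditional law of a Gaussian vector given a Gaussian sub-$\sigma$-field is Gaussian with mean the $L^2$-projection onto the closed subspace generated by that family, so conditional covariances are the covariances of the residuals, and two centered Gaussian families are conditionally independent iff all cross-covariances vanish). Next, using Lemma \ref{Flemma1}, I would identify $\E[X_f\mid\sigma(\partial A)]=X_{\bH_{\partial A}\tilde f}$, so the residual $X_f-\E[X_f\mid\sigma(\partial A)]$ equals $X_{f-\bH_{\partial A}\tilde f}$, and similarly for $g$. It then remains to analyze, for $s(f)\subset\overline A$ and $s(g)\subset\overline{A^c}$, the quantity $\cE(f-\bH_{\partial A}\tilde f,\ g-\bH_{\partial A}\tilde g)$. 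Here I would use the preliminaries recalled just before the lemma: $\bH_{\partial A}\tilde f\in\cF_e$, $s(\bH_{\partial A}\tilde f)\subset\partial A$, and $\cE(\bH_{\partial A}\tilde h,\ell)=0$ whenever $\ell\in\cF_e^{(\partial A)^c}$. A key simplification is that for $f$ with $s(f)\subset\overline A$, the function $f-\bH_{\overline A}\tilde f$ has spectrum... actually since $s(f)\subset\overline A$ we expect $\bH_{\overline A}\tilde f$ to recover $f$ up to something supported on $\partial A$; combining this with Lemma \ref{Flemma1} lets one rewrite the cross-covariance so that the condition reduces to $\cE(X_{\bH_{\overline A}\tilde f},\ g-\bH_{\partial A}\tilde g)$-type expressions, and then to the inclusion $\sigma(X_{\bH_{\overline A}\tilde f}: s(f)\subset\overline{A^c})\subset\sigma(\partial A)$ as stated. (Note the roles of $f$ and $g$ in \eqref{eq:Markov1}: one projects variables with spectrum in $\overline{A^c}$ onto $\sigma(\overline A)$ and asks that the result be $\sigma(\partial A)$-measurable; this is precisely the residual-orthogonality condition after unwinding Lemma \ref{Flemma1}.)

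The main obstacle I anticipate is the bookkeeping of spectra near the boundary: one must carefully justify that $\bH_{\overline{A}}\tilde f - \bH_{\partial A}\tilde f$ (and the analogous object on the complement side) has spectrum contained in $\overline{A}\cap\overline{A^c}=\partial A$ (or is otherwise controlled by $\sigma(\partial A)$), and that no $\mathbb P$-null-set or quasi-everywhere subtlety is lost when passing between the $L^2(\Omega)$ picture and the $\cF_e$ picture — in particular that $\sigma(\overline A)$ really is generated as a Gaussian space by $\{X_f:s(f)\subset\overline A\}$ together with $\mathscr N$, so that the Gaussian criterion applies verbatim. Once the identification of conditional expectations via Lemma \ref{Flemma1} is in hand and the $\partial A$-spectrum claim is established, the equivalence with \eqref{eq:Markov1} is a short rearrangement: $\sigma(\overline A)$ and $\sigma(\overline{A^c})$ are conditionally independent given $\sigma(\partial A)$ iff every $X_g$ with $s(g)\subset\overline{A^c}$, projected onto $\sigma(\overline A)$ (which gives $X_{\bH_{\overline A}\tilde g}$), already lies in $\sigma(\partial A)$, which is exactly the displayed inclusion. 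I would present the two implications ($\Rightarrow$ and $\Leftarrow$) explicitly, each being a direct reading of the conditional-covariance computation in one direction.
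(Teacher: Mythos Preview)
Your proposal ultimately lands on the same two ingredients the paper uses: the abstract criterion that, for Gaussian families with \(\sigma(\partial A)\subset\sigma(\overline A)\), conditional independence of \(\sigma(\overline A)\) and \(\sigma(\overline{A^c})\) given \(\sigma(\partial A)\) is equivalent to \(\E[X_g\mid\sigma(\overline A)]\) being \(\sigma(\partial A)\)-measurable for every \(g\) with \(s(g)\subset\overline{A^c}\); and Lemma~\ref{Flemma1}, which identifies that conditional expectation as \(X_{\bH_{\overline A}\tilde g}\). The paper's proof is literally these two sentences: it cites \cite[Lemma~2.1]{P} for the first equivalence and then applies \eqref{eq:psMar}.

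Where your plan diverges is the middle section. You set up the residual-orthogonality criterion with projections onto \(\sigma(\partial A)\), identify those via \(\bH_{\partial A}\), and then try to pass from \(\cE(f-\bH_{\partial A}\tilde f,\,g-\bH_{\partial A}\tilde g)\) to a statement about \(\bH_{\overline A}\). That detour is unnecessary and is the source of the ``obstacles'' you anticipate (controlling the spectrum of \(\bH_{\overline A}\tilde f-\bH_{\partial A}\tilde f\), etc.). None of that is needed: the passage from the \(\sigma(\partial A)\)-residual criterion to the \(\sigma(\overline A)\)-projection criterion is a purely \(L^2(\Omega)\) argument using only \(\sigma(\partial A)\subset\sigma(\overline A)\) and the tower property, with no reference to \(\bH_{\partial A}\) or spectra at all. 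Concretely, if \(\E[X_g\mid\sigma(\overline A)]\) is \(\sigma(\partial A)\)-measurable it equals \(\E[X_g\mid\sigma(\partial A)]\), and conversely one checks that \(\E[X_g\mid\sigma(\overline A)]-\E[X_g\mid\sigma(\partial A)]\) is orthogonal to every \(X_f\) with \(s(f)\subset\overline A\). This is exactly Pitt's lemma, and once you have it the proof is one line. So your endpoint is correct and matches the paper, but you should drop the \(\bH_{\partial A}\) computations and the spectral bookkeeping, which are not part of the argument.
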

\begin{proof}
By \cite[Lemma 2.1]{P}, \(\G(\cE)\) has the Markov property with respect to \(A\) if and only if it holds that \(\sigma (\E(X_f|\sigma (\overline{A})): s(f)\subset \overline{A^c})\subset \sigma (\partial A)\). (\ref{eq:Markov1}) follows from this and (\ref{eq:psMar}).
\end{proof}

\begin{remark}\label{Fukurem}
For any irreducible regular Dirichlet form, \(\sigma(\overline{A})\) is conditionally independent of \(\sigma(\overline{A}^c)\) given \(\sigma(X_{\bH_{\overline{A}}\tilde{f}}: s(f)\subset \overline{A^c})\) by \(
(\ref{eq:psMar})\). This conditionally independence is called a pseudo Markov property in \cite{FO2}. A pseudo Markov property can be regarded as a domain Markov property and it is used in many literatures, for example \cite{DS},\cite{HMP},\cite{SS}, etc.
\end{remark}

To prove the Markov property with respect to \(A\), it is enough to show that \((\ref{eq:Markov1})\). By the definition of \(\bH_{\overline{A}}\), intuitively, we consider only before \(Z\) hits \(A\), and the restriction of \((\cE,\cF)\) to \(A\) is transient if \(m(A^c)>0\). So, in order to prove Theorem \ref{mainmain} when \((\cE,\cF)\) is recurrent, we use the equivalence of the transient case proved by R\"{o}ckner \cite{R}.

\begin{proof}[Proof of the equivalence between \((1),(2),(3)\) of Theorem \ref{mainmain}]
When \((\cE,\cF)\) is transient, the equivalence was proved by R\"{o}ckner \cite{R}, so we assume that \((\cE,\cF)\) is recurrent. \((3)\) implies \((2)\) clearly, and \((2)\) implies \((1)\) by \cite{FO2}.

We show that \((1)\) implies \((2)\). Suppose that \((\cE,\cF)\) is local. For a relatively compact open set \(A\subset E\), by Lemma \ref{Flemma}, it is enough to show that \(s(\bH _{\overline{A}}f)\subset \partial A\) for \(f\in \cF_e\) with \(s(f)\subset \overline{A^c}\). Fix \(f\in \cF_e\) with \(s(f)\subset \overline{A^c}\).

If \((\overline{A})^c\) is the empty set, it holds that \(\bH_{\overline{A}}f=f\ a.e.\) and \(C_c((\partial A)^c)\cap \cF = C_c(A)\cap \cF\). So, there is nothing to prove.

If \((\overline{A})^c\) is not the empty set, we take an open set \(U\) satisfying \(\overline{A}\subset U\) and \(m(U^c)>0\). Indeed, since \(E\) is a metric space, there exist non-empty open sets \(U\) and \(V\) such that \(U\cap V=\phi\) and \(\overline{A}\subset U\). Since \(m\) is full-support, it holds that \(m(V)>0\) and we have \(m(U^c)>0\).

Since \(s(f)\subset \overline{A^c}\), it holds that \(\cE(f-\bH_{U^c}f,g_1)=0\) for \(g_1\in C_c(A)\cap \cF^{U}\). Since \((\cE,\cF^U)\) on \(L^2(U;m|_U)\) is a transient Dirichlet form, by \cite{R}, it holds that 
\begin{equation}\label{eq:j1}
\cE(\bH_{\overline{A}}^U(f-\bH_{U^c}f),g)=0
\end{equation} for \(g\in C_c((\partial A)^c)\cap \cF^{U}\), where \(\bH_{\overline{A}}^Uf(x):=\E_x(f(Z_{\tau_{\overline{A}}}); \tau_{\overline{A}}<\tau_{U^c})\).

For \(g\in C_c((\partial A)^c)\cap \cF^{U}\), \(\bH_{U^c}\bH_{\overline{A}}^Ug\) vanishes on \(U^c\), so we have  
\begin{eqnarray}
\cE(\bH_{\overline{A}}^U\bH_{U^c}f,g)&=&\cE(\bH_{U^c}f,\bH_{\overline{A}}^Ug) \nonumber \\
&=&\cE(\bH_{U^c}f,\bH_{U^c}\bH_{\overline{A}}^Ug) \nonumber \\
&=&0.\label{eq:j2}
\end{eqnarray}
By \((\ref{eq:j1})\) and \((\ref{eq:j2})\), for \(g\in C_c((\partial A)^c)\cap \cF^{U}\), we have \begin{equation}\label{eq:main2}\cE(\bH_{\overline{A}}^Uf,g)=0.\end{equation}

Take \(h\in C_c((\partial A)^c)\cap \cF\). Let \(K\) be the compact set defined by \({\rm supp}(h)\cap \overline{A}\). By \cite[Exercise 1.4.1]{FOT}, since \(C_c(E)\cap \cF\) is a special standard core, there exists a non-negative function \(\psi \in C_c(E)\cap \cF\) such that \(\psi=1\) on \(K\) and \(\psi =0\) on \(A^c\). Moreover it holds that \(h\psi \in C_c(E)\cap \cF\), \(h\psi =0\) on \(U^c\) and \({\rm supp}(h\psi)\subset {\rm supp}(h)\subset (\partial A)^c\).

By setting \(g:=h\psi\) in \((\ref{eq:main2})\), we have \begin{equation}\label{eq:main3}
\cE(\bH_{\overline{A}}^Uf,h\psi)=0.
\end{equation}

Now we have \(\bH_{\overline{A}}f-\bH_{\overline{A}}^Uf\in \cF_e\) and \(\bH_{\overline{A}}f-\bH_{\overline{A}}^Uf=0\) on \(\overline{A}\), so it is in \(\cF_e^{(\overline{A})^c}\). By \cite[Theorem 3.4.9]{CF}, there are \(\cE\)-Cauchy sequence \(\{f_n\}\subset \cF^{\overline{A}^c} \) such that \(f_n\) convergent to \(\bH_{\overline{A}}f-\bH_{\overline{A}}^Uf\ a.e.\) Since \(f_n \cdot h\psi=0\), we have \(\cE(f_n,h\psi)=0\) by the locality of \((\cE,\cF)\), and \(\cE(\bH_{\overline{A}}f-\bH_{\overline{A}}^Uf,h\psi)=0\). Combining this with \((\ref{eq:main3})\), we obtain
\begin{equation}\label{eq:main4}
\cE(\bH_{\overline{A}}f,h\psi)=0
\end{equation}
Since \(h=0\) on \(\overline{A}\setminus K\) and \(1-\psi=0\) on \(K\), we have \(h(1-\psi)=0\) on \(\overline{A}\). So we obtain \begin{equation}\label{eq:main5}
\cE(\bH_{\overline{A}}f,h(1-\psi))=0.
\end{equation}

Thus, by \((\ref{eq:main4})\) and \((\ref{eq:main5})\), we have
\begin{equation*}
\cE(\bH_{\overline{A}}f,h)=0
\end{equation*}
This means \(s(\bH _{\overline{A}}f)\subset \partial A\).

Next, we prove that \((1)\) implies \((3)\). Suppose that \((\cE,\cF)\) is local. Let \(B\) be an open set. It is enough to show that \(s(\bH _{\overline{B}}f)\subset \partial B\) for \(f\in \cF_e\) with \(s(f)\subset \overline{B^c}\). We fix \(f\in \cF_e\) with \(s(f)\subset \overline{B^c}\). Take \(h \in C_c((\partial B)^c)\cap \cF\) and set \(K:={\rm supp} (h)\cap \overline{B}\), which is compact. Since \(E\) is locally compact separable, there is relatively compact open set \(A\) satisfying \(K\subset A \subset \overline{A}\subset B\). Since \(C_c(E)\cap \cF\) is a special standard core, there exists a non-negative function \( \varphi \in C_c(E)\cap \cF\) such that \(\varphi=1\) on \(K\), \(\varphi=0\) on \(A^c\) and \(0\leq \varphi \leq 1\). As \(s(f)\subset \overline{B^c}\subset \overline{A^c}\), by the Markov property with respect to relatively compact open set \(A\), we have
\begin{equation}\label{eq:main6}
\cE(\bH_{\overline{A}}f,g)=0
\end{equation}  
for \(g\in C_c((\partial A)^c)\cap \cF\). Since \(h\varphi \in C_c(E)\cap \cF\) and \(h\varphi=0\) on \(A^c=\overline{A^c}\), we have \(h\varphi \in C_c((\partial A)^c)\cap \cF\). By \((\ref{eq:main6})\), it holds that
\begin{equation}\label{eq:main7}
\cE(\bH_{\overline{A}}f,h\varphi)=0.
\end{equation}
Since \(\bH_{\overline{B}}f-\bH_{\overline{A}}f=0\) on \(\overline{A}\), by the local property of \((\cE, \cF)\) and \((\ref{eq:main7})\), we have 
\begin{equation}\label{eq:main8}
\cE(\bH_{\overline{B}}f,h\varphi)=\cE(\bH_{\overline{A}}f,h\varphi)=0.
\end{equation}
Since \(h(1-\varphi)\in \cF^{\overline{B}^c}\), it holds that
\begin{equation}\label{eq:main9}
\cE(\bH_{\overline{B}}f,h(1-\varphi))=0.
\end{equation}
By \((\ref{eq:main8})\) and \((\ref{eq:main9})\), we have \(\cE(\bH_{\overline{B}}f,h)=0\). Thus \((3)\) is proved.
\end{proof}

In order to prove the equivalence of \((1)\) and \((4)\) in Theorem \ref{mainmain}, we prove the following theorem.
\begin{theorem}\label{main2}
\((1),(2),(3)\) of Theorem \ref{mainmain} are equivalent to the following:\\
\((3^*)\) \(\sigma(\overline{A})\) is conditionally independent of \(\sigma (\overline{B^c})\) given \(\sigma (\overline{A}\cap\overline{B^c})\) for any open or closed set \(A\) and any open or closed set \(B\) satisfying \(B\subset A.\)
\end{theorem}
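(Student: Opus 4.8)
## Proof proposal for Theorem \ref{main2}

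The plan is to show that $(3^*)$ is equivalent to the previously established conditions by sandwiching it between $(3)$ and $(4)$-type statements, but since $(4)$ is not yet available, the cleanest route is to derive $(3^*)$ from $(1)$ using the hitting-distribution machinery, and conversely note that $(3^*)$ trivially implies $(3)$ by taking $B=\emptyset$ (so that $\overline{B^c}=E$, $\sigma(\overline{B^c})=\mathscr{M}$, and $\sigma(\overline{A}\cap\overline{B^c})=\sigma(\overline{A})$ — wait, this needs care) or more honestly by taking $B=A$ an open set, in which case $\overline{A}\cap\overline{B^c}=\overline{A}\cap\overline{A^c}=\partial A$ and $(3^*)$ becomes exactly the Markov property with respect to the open set $A$, i.e.\ $(3)$. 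So the content is really the implication $(1)\Rightarrow(3^*)$.

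For $(1)\Rightarrow(3^*)$, I would argue as follows. By the analogue of Lemma \ref{Flemma} (which rests only on \cite[Lemma 2.1]{P} and the identity $\E(X_f\mid\sigma(\overline{A}))=X_{\bH_{\overline{A}}\tilde f}$ from Lemma \ref{Flemma1}), the conditional independence in $(3^*)$ is equivalent to the inclusion
\begin{equation}\label{eq:star-reduction}
\sigma\big(X_{\bH_{\overline{A}}\tilde f}: f\in\cF_e,\ s(f)\subset\overline{B^c}\big)\subset\sigma(\overline{A}\cap\overline{B^c}).
\end{equation}
Since $\bH_{\overline{A}}\tilde f$ is in $\cF_e$ with $s(\bH_{\overline{A}}\tilde f)\subset\overline{A}$, it suffices to show $s(\bH_{\overline{A}}\tilde f)\subset\overline{A}\cap\overline{B^c}$, equivalently $s(\bH_{\overline{A}}\tilde f)\subset\overline{B^c}$, for every $f$ with $s(f)\subset\overline{B^c}$. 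This is where I would split into the four cases according to whether $A$ and $B$ are open or closed; the key geometric fact is that $B\subset A$ forces $\overline{A}\setminus\overline{B^c}$ to be contained in the interior of $A$, so that on this region the hitting operator $\bH_{\overline{A}}$ behaves like the identity. Concretely, for $h\in C_c((\overline{B^c})^c)\cap\cF$ I want $\cE(\bH_{\overline{A}}\tilde f,h)=0$. Using a cutoff $\varphi\in C_c(E)\cap\cF$ with $\varphi=1$ on $\operatorname{supp}(h)\cap\overline{A}$ and $\varphi=0$ outside a relatively compact open set squeezed between this compact set and the interior of $A$ (using local compactness and that $C_c(E)\cap\cF$ is a special standard core, exactly as in the proof of $(1)\Rightarrow(2),(3)$), I split $h=h\varphi+h(1-\varphi)$. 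For the $h\varphi$ piece I invoke the already-proven Markov property for the relatively compact open set where $\varphi$ lives, together with locality to transport the identity from $\bH_{\overline{A'}}\tilde f$ to $\bH_{\overline{A}}\tilde f$ (since they agree on $\overline{A'}$); for the $h(1-\varphi)$ piece, $h(1-\varphi)$ vanishes on $\overline{A}$ hence lies in $\cF^{\overline{A}^c}_e$, and $\cE(\bH_{\overline{A}}\tilde f,g)=0$ for all $g\in\cF^{\overline{A}^c}_e$ by \cite[Theorem 3.4.8]{CF}.

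The main obstacle I anticipate is the case analysis in the geometry — in particular the case where $A$ is closed and $B$ is closed, where $\overline{A}=A$ and one must check that $\operatorname{supp}(h)\cap A$ still sits inside an open set contained in $A$ that avoids $\overline{B^c}$; here one uses that $h$ vanishes on a neighborhood of $\overline{B^c}\supset\partial$-data and that $B\subset A$. A secondary subtlety is making sure the reduction \eqref{eq:star-reduction} is genuinely symmetric in the two fields, i.e.\ that one does not also need a dual inclusion with the roles of $\overline{A}$ and $\overline{B^c}$ swapped; this is handled because $\sigma(\overline{A}\cap\overline{B^c})\subset\sigma(\overline{A})\cap\sigma(\overline{B^c})$ and the pseudo-Markov property (Remark \ref{Fukurem}) already gives conditional independence given the larger field $\sigma(X_{\bH_{\overline{A}}\tilde f}:s(f)\subset\overline{B^c})$, so once \eqref{eq:star-reduction} is established the conditional independence given $\sigma(\overline{A}\cap\overline{B^c})$ follows. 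Finally, for the converse direction I record that $(3^*)$ with $B=A$ open is precisely $(3)$, closing the equivalence loop with the already-proven chain $(1)\Leftrightarrow(2)\Leftrightarrow(3)$.
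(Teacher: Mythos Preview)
Your reduction via Pitt's lemma and Lemma~\ref{Flemma1} to the spectral inclusion $s(\bH_{\overline{A}}\tilde f)\subset\overline{A}\cap\overline{B^c}$ for every $f$ with $s(f)\subset\overline{B^c}$ is correct, and your converse ($(3^*)\Rightarrow(3)$ by taking $B=A$ open) matches the paper. There is, however, a slip in the cutoff construction: you squeeze the auxiliary relatively compact open set $A'$ into the interior of $A$, but to invoke the Markov property of condition $(2)$ for $A'$ you need $s(f)\subset\overline{(A')^c}$, i.e.\ $A'\subset B$, not merely $A'\subset\mathring A$. The fix is immediate: since $h\in C_c((\overline{B^c})^c)=C_c(\mathring B)$, take $\operatorname{supp}(h)\subset A'\subset\overline{A'}\subset\mathring B$ instead. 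With this correction the argument goes through and in fact simplifies: because $\operatorname{supp}(h)\subset\mathring B\subset\overline{A}$ always, one has $\operatorname{supp}(h)\cap\overline{A}=\operatorname{supp}(h)$, so $\varphi\equiv 1$ on $\operatorname{supp}(h)$, the piece $h(1-\varphi)$ vanishes identically, and the anticipated four-case analysis never materializes.

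This is a genuinely different route from the paper's. The paper first proves a $\sigma$-field decomposition (Lemma~\ref{lemofmain2}), namely $\sigma(\overline{A^c})\vee\sigma(\overline{A}\cap\overline{B^c})=\sigma(\overline{B^c})$, by showing that $s(f-\bH_{\overline{A^c}}\tilde f)\subset\overline{A}\cap\overline{B^c}$; it then reduces $(3^*)$ to the already-proven $(3)$ by treating separately $\Gamma\in\sigma(\overline{A^c})$ and $\Gamma\in\sigma(\overline{A}\cap\overline{B^c})$, using that either $A$ or $A^c$ is open so that $(3)$ applies. Your approach instead verifies $s(\bH_{\overline{A}}\tilde f)\subset\overline{B^c}$ directly from $(2)$ and locality, and never uses that $A$ or $B$ is open or closed. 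Consequently, once corrected, your argument actually gives $(3^*)$ for \emph{arbitrary} sets $B\subset A$, and hence $(1)\Rightarrow(4)$ in one stroke, absorbing the paper's separate proof of $(3)\Leftrightarrow(4)$. What you forgo is the structural identity of Lemma~\ref{lemofmain2} as a byproduct.
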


\begin{remark}
Sometimes the above type of the Markov property is adopted as the definition of the Markov property for set-indexed processes. See \cite{K} and \cite{E2} for examples.
\end{remark}

In order to prove Theorem \ref{main2}, we use the following lemma.
\begin{lemma}\label{lemofmain2}
For any sets \(A\) and \(B\) satisfying \(B\subset A\), it holds that \[\sigma(\overline{A^c}) \vee \sigma(\overline{A}\cap \overline{B^c})=\sigma(\overline{B^c}).\]
\end{lemma}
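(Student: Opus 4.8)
The plan is to prove the two inclusions separately. The inclusion $\sigma(\overline{A^c})\vee\sigma(\overline A\cap\overline{B^c})\subseteq\sigma(\overline{B^c})$ is soft: directly from the definition, $A\mapsto\sigma(A)$ is monotone (if $s(f)\subseteq A\subseteq A'$ then $s(f)\subseteq A'$, and $\mathscr N$ is common to all these $\sigma$-fields), and $B\subseteq A$ gives $A^c\subseteq B^c$, hence $\overline{A^c}\subseteq\overline{B^c}$, while also $\overline A\cap\overline{B^c}\subseteq\overline{B^c}$; monotonicity then gives the inclusion at once.

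For the reverse inclusion it suffices to check that every generator $X_f$ of $\sigma(\overline{B^c})$, i.e.\ $f\in\cF_e$ with $s(f)\subseteq\overline{B^c}$, is measurable with respect to $\sigma(\overline{A^c})\vee\sigma(\overline A\cap\overline{B^c})$. I would decompose
\[
X_f=\E\bigl(X_f\mid\sigma(\overline{A^c})\bigr)+\bigl(X_f-\E\bigl(X_f\mid\sigma(\overline{A^c})\bigr)\bigr).
\]
By Lemma \ref{Flemma1} (applied with $A^c$ in place of the set there), $\E(X_f\mid\sigma(\overline{A^c}))=X_{\bH_{\overline{A^c}}\tilde f}$, and since $\overline{A^c}$ is closed we have $s(\bH_{\overline{A^c}}\tilde f)\subseteq\overline{A^c}$, so the first term is $\sigma(\overline{A^c})$-measurable. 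By linearity of $h\mapsto X_h$ the second term equals $X_g$ with $g:=f-\bH_{\overline{A^c}}\tilde f\in\cF_e$, so everything reduces to the spectral bound $s(g)\subseteq\overline A\cap\overline{B^c}$, which exhibits $X_g$ as a generator of $\sigma(\overline A\cap\overline{B^c})$.

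One half, $s(g)\subseteq\overline{B^c}$, is immediate from subadditivity of the spectrum and $A^c\subseteq B^c$: $s(g)\subseteq s(f)\cup s(\bH_{\overline{A^c}}\tilde f)\subseteq\overline{B^c}\cup\overline{A^c}=\overline{B^c}$. The other half, $s(g)\subseteq\overline A$, is the main obstacle. The idea is that $\bH_{\overline{A^c}}\tilde f=\tilde f$ q.e.\ on $\overline{A^c}$, so $\tilde g=0$ q.e.\ on $\overline{A^c}=(\mathrm{int}\,A)^c$, i.e.\ $g\in\cF_e^{\mathrm{int}\,A}$; by \cite[Theorem 3.4.9]{CF} there is an $\cE$-Cauchy sequence $\{g_n\}\subseteq\cF^{\mathrm{int}\,A}$ with $g_n\to g$ $m$-a.e. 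For any $h\in C_c((\overline A)^c)\cap\cF$ we have $\mathrm{supp}(h)\subseteq(\overline A)^c\subseteq(\mathrm{int}\,A)^c$, so $\tilde g_n=0$ q.e.\ on $\mathrm{supp}(h)$, hence $g_n h=0$, hence $\cE(g_n,h)=0$ by the local property of $(\cE,\cF)$ — exactly the argument used above in the proof of $(1)\Rightarrow(2)$ — and letting $n\to\infty$ gives $\cE(g,h)=0$. By \cite[Theorem 2.3.3]{FOT} this means $s(g)\subseteq\overline A$, so $s(g)\subseteq\overline A\cap\overline{B^c}$ and the lemma follows. I expect this transition from ``$g$ vanishes q.e.\ outside $\mathrm{int}\,A$'' to ``$s(g)\subseteq\overline A$'' to be the only subtle step, and it is the one place where the local property enters (the lemma being invoked only inside the proof of Theorem \ref{main2}, where condition $(1)$ of Theorem \ref{mainmain} is available).
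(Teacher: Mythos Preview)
Your proof is correct and follows the same strategy as the paper: decompose $X_f = X_{\bH_{\overline{A^c}}\tilde f} + X_{f-\bH_{\overline{A^c}}\tilde f}$ and show $s(f-\bH_{\overline{A^c}}\tilde f)\subset\overline{A}\cap\overline{B^c}$, using locality for the crucial step. The only difference is cosmetic: you establish the inclusions $s(g)\subset\overline{A}$ and $s(g)\subset\overline{B^c}$ separately (the latter by subadditivity, the former by the same approximation-plus-locality argument used in the proof of $(1)\Rightarrow(2)$), whereas the paper tests directly against $h\in C_c((\overline{A}\cap\overline{B^c})^c)\cap\cF$ and splits $h$ via a cutoff into a piece supported away from $\overline{A}$ and a piece supported away from $\overline{B^c}$.
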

\begin{proof}
\(\overline{A^c}\subset \overline{B^c}\) and \(\overline{A}\cap\overline{B^c}\subset \overline{B^c}\) yield that \(\sigma(\overline{A^c}) \vee \sigma(\overline{A}\cap \overline{B^c})\subset \sigma(\overline{B^c}).\) We will show the other direction. Fix \(f\in \cF_e\) satisfying \(s(f)\subset \overline{B^c}\).

For any \(g\in C_c((\overline{A}\cap \overline{B^c})^c)\), there exist \(g_1, g_2 \in \cF\cap C_c(E)\) such that \(g=g_1+g_2\), \({\rm supp}(g_1)\subset \overline{A^c}\) and \({\rm supp}(g_2)\subset (\overline{B^c})^c\) by the similar way as the proof of Theorem \(\ref{mainmain}\). In fact, for a relatively compact open set \(U\) satisfying \(K:={\rm supp}(g)\cap \overline{\overline{B^c}^c} \subset U \subset \overline{U}\subset \overline{A}\), there exists \(\varphi \in \cF \cap C_c(E)\) with \(\varphi|_K=1, \varphi|_{(\overline{A})^c}=0\) and \(0\leq \varphi \leq 1\), and we set \(g_1:=g(1-\varphi)\) and \(g_2:=g\varphi\). Then we have
\begin{eqnarray}
\cE(f-\bH_{\overline{A^c}}f,g)&=&\cE(f-\bH_{\overline{A^c}}f,g_1)+\cE(f-\bH_{\overline{A^c}}f,g_2)\nonumber\\
&=&\cE(f-\bH_{\overline{A^c}}f,g_1)+\cE(f,g_2)-\cE(\bH_{\overline{A^c}}f,g_2) \nonumber\\
&=&0. \nonumber
\end{eqnarray}
In the last equality, we used the local property of \((\cE,\cF)\), \(s(f)\subset \overline{B^c}\) and \(\bH_{\overline{A^c}}g_2=0\). Therefore we have \(s(f-\bH_{\overline{A^c}}f)\subset \overline{A}\cap\overline{B^c}.\)

For any \(h\in \cF\), it holds that \(\mathbb{E}((X_f-X_{f-\bH_{\overline{A^c}}f}-X_{\bH_{\overline{A^c}}f})X_h)=0\), so we have \(X_f-X_{\bH_{\overline{A^c}}f}=X_{f-\bH_{\overline{A^c}}f}\) and \((X_f)^{-1}(I) \in \sigma (X_{f-\bH_{\overline{A^c}}f},X_{\bH_{\overline{A^c}}f})\) for a Borel set \(I\subset \mathbb{R}\). Since \(s(\bH_{\overline{A^c}}f)\subset \overline{A^c}\) and \(s(f-\bH_{\overline{A^c}}f)\subset \overline{A}\cap\overline{B^c}\), it holds that  \((X_f)^{-1}(I) \in \sigma(\overline{A^c})\vee \sigma(\overline{A}\cap\overline{B^c}).\) Thus we have \(\sigma(\overline{B^c})\subset \sigma(\overline{A^c}) \vee \sigma(\overline{A}\cap \overline{B^c}).\)
\end{proof}

\begin{proof}[Proof of the Theorem \ref{main2}]
The equivalence of \((1),(2)\) and \((3)\) is already proved. \((3^*)\) cleary implies \((3)\) by setting \(A=B.\) We now show that \((3)\) implies \((3^*).\) For open or closed sets \(A\) and \(B\) with \(B\subset A\), it is enough to show that \begin{equation}\label{eq:main21}\mathbb{P}(\Gamma|\sigma(\overline{A}))=\mathbb{P}(\Gamma|\sigma(\overline{A}\cap\overline{B^c}))\end{equation}
 for any \(\Gamma \in \sigma(\overline{B^c}).\) By Lemma \ref{lemofmain2}, we may assume that \(\Gamma \in \sigma (\overline{A^c})\) or \(\Gamma \in \sigma(\overline{A}\cap \overline{B^c})\).

For \(\Gamma \in \sigma (\overline{A^c})\), since \(A\) or \(A^c\) is an open set, by the Markov property with respect to an open set, we have \(\mathbb{P}(\Gamma|\sigma(\overline{A}))=\mathbb{P}(\Gamma|\sigma(\partial A))\), and (\ref{eq:main21}) follows from the fact \(\sigma(\partial A)\subset \sigma(\overline{A}\cap\overline{B^c})\subset \sigma(\overline{A})\).

For \(\Gamma \in \sigma(\overline{A}\cap \overline{B^c})\), we have \(\Gamma \in \sigma(\overline{A})\), and \(\mathbb{P}(\Gamma|\sigma(\overline{A}))=\Gamma=\mathbb{P}(\Gamma|\sigma(\overline{A}\cap\overline{B^c}))\). Thus the proof is completed.
\end{proof}

\begin{proof}[Proof of the equivalence between \((3)\) and \((4)\) of Theorem \ref{mainmain}]
It is clear that \((4)\) implies to \((3)\). Suppose that \((3)\) holds. Then \((3^*)\) holds by Theorem \ref{main2}. For any subset \(A\subset E\), since \(\overline{A^c}\supset A^c \supset (\overline{A})^c\) , we have \(\overline{A^c}^c \subset \overline{A}\). By \((3^*)\), \(\sigma(\overline{A})\) is conditionally independent of \(\sigma(\overline{A^c})\) given \(\sigma(\partial A)\). Thus the proof is completed.
\end{proof}

\section{The Strong Markov property}\label{SMarkov}
In this section, we prove Theorem \ref{mainstrong}. Throughout this section, we assume that \((\cE,\cF)\) is an irreducible regular local Dirichlet form on \(L^2(E;m)\). By Theorem \ref{mainmain}, \(\G(\cE)\) has the Markov property.
 
In Section \ref{intro}, we defined the filtrations for stopping sets and the strong Markov property. These can be viewed as analogues to those for a Markov process. Indeed, for a Markov process \(Z=\{Z_t\}_{t\geq 0}\), let \(\{\mathscr{F}_t\}_{t\geq 0}\) be a filtration to which \(Z\) is adapted, and  \(\tau\) be a stopping time. \(Z\) has the strong Markov property if \(\{Z_{\tau+s}\}_{s\geq 0}\) is conditionally independent of \(\mathscr{F}_{\tau}\) given \(\sigma(Z_{\tau})\), where \(\mathscr{F}_{\tau}:=\sigma(\{\Gamma\ ;\ \Gamma \cap \{\tau\leq t\}\in \mathscr{F}_t\ {\rm for\ all\ } t\})\). By considering \(\sigma(\overline{{\Ao}^c})\) corresponds to the \(\sigma\)-field generated by \(\{Z_{\tau+s}\}_{s\geq 0}\), we can regard the definitions of filtrations for stopping sets as the correspondences to those for a process.

\begin{remark}\label{remev}
For set-indexed processes, there are other definitions of \(\sigma\)-fields for random sets. For example, Evstigneev \cite{E1,E2} and Kinateder \cite{K} defined the correspondence to \(\sigma(\Ao\cap \overline{{\Bo}^c})\) as that includes the \(\sigma\)-field generated by \(\Ao\) and \(\Bo\).
\end{remark}

\begin{remark}\label{remstop}
Under the assumption of Theorem \ref{mainstrong}, \(\Ao\) and \(\overline{\Bo^c}\) are stopping random sets. Indeed, it holds that 
\begin{eqnarray*}
\{\Ao\subset A\}=\{\Ao\subset A,\  \overline{{\Bo}^c}\subset E \}\in \sigma(A\cap E)=\sigma(A),\\
\{\overline{\Bo^c}\subset B\}=\{\Ao\subset E,\  \overline{{\Bo}^c}\subset B \}\in \sigma(E\cap B)=\sigma(B)\ 
\end{eqnarray*} for any closed sets \(A\) and \(B\).
\end{remark}

To prove Theorem \ref{mainstrong}, we approximate \(\sigma\)-fields of random sets by those of discrete random sets. We need some properties for these approximating \(\sigma\)-fields \(\sigma(\Ao^n), \sigma(\overline{\Bo^c}^n)\) such as the monotonicity for \(n\) and the conditional independence of \(\sigma(\Ao^n)\) and \(\sigma(\overline{\Bo^c}^n)\) given \(\sigma(\Ao^n\cap \overline{\Bo^c}^n)\). This is because the strong Markov property follows from the strong Markov properties for approximating sequences by taking the limit as \(n\) goes to infinity. So we first define discrete random sets \(\Ao^n, \overline{\Bo^c}^n\) to approximate \(\Ao, \overline{\Bo^c}\), respectively.

Since \(E\) is a separable metric space, there exists a countable open basis \(\{U_i\}_{i=1}^{\infty}\). For a closed set \(F\), we set \(F^n:=\bigcap_{i\in \Lambda^n_F} {U_i}^c\) where \(\Lambda^n_F:=\{1\leq i\leq n\ ;\ F\subset {U_i}^c\}\). Then it holds that \(F^{n}\supset F^{n+1}\) and \(F=\bigcap_{i=1}^{\infty}F^n.\)

\begin{lemma}\label{splitn}
Under the assumption of Theorem \ref{mainstrong}, it holds that \(\{\Ao^n\subset A,\  {\overline{\Bo^c}}^n\subset B \}\in \sigma(A\cap B)\) for all closed sets \(A,B\) and \(n\geq 1\).
\end{lemma}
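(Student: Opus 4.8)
The plan is to reduce the statement about $\Ao^n$ and $\overline{\Bo^c}^n$ to the hypothesis of Theorem \ref{mainstrong} about $\Ao$ and $\overline{\Bo^c}$, by expressing the event $\{\Ao^n\subset A,\ \overline{\Bo^c}^n\subset B\}$ as a countable combination of events of the form $\{\Ao\subset A',\ \overline{\Bo^c}\subset B'\}$ for suitable closed sets $A',B'$. The key observation is that, by construction, $F^n$ is determined by which of the finitely many basis sets $U_1,\dots,U_n$ contain $F$ in their complement; more precisely, for a fixed index set $\Lambda\subset\{1,\dots,n\}$, the event $\{\Lambda^n_F=\Lambda\}$ coincides with $\{F\subset \bigcap_{i\in\Lambda}U_i^c\}\cap\bigcap_{i\in\{1,\dots,n\}\setminus\Lambda}\{F\not\subset U_i^c\}$, and on this event $F^n=\bigcap_{i\in\Lambda}U_i^c$, a fixed closed set. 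So I would first write
\[
\{\Ao^n\subset A,\ \overline{\Bo^c}^n\subset B\}=\bigcup_{\Lambda,\Lambda'}\ \{\Lambda^n_{\Ao}=\Lambda\}\cap\{\Lambda^n_{\overline{\Bo^c}}=\Lambda'\}\cap\Bigl\{\textstyle\bigcap_{i\in\Lambda}U_i^c\subset A\Bigr\}\cap\Bigl\{\textstyle\bigcap_{i\in\Lambda'}U_i^c\subset B\Bigr\},
\]
where the union is over the finitely many pairs of subsets $\Lambda,\Lambda'\subset\{1,\dots,n\}$, and note that the last two conditions are deterministic (either always true, in which case they may be dropped, or always false, in which case that term of the union is empty).

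Next I would rewrite each surviving term using only "$\subset$" events. The event $\{\Lambda^n_{\Ao}=\Lambda\}$ equals $\{\Ao\subset\bigcap_{i\in\Lambda}U_i^c\}\setminus\bigcup_{j\in\{1,\dots,n\}\setminus\Lambda}\{\Ao\subset U_j^c\}$, and similarly for $\overline{\Bo^c}$. Distributing the intersection over these finite unions and complements, the term becomes a finite Boolean combination of events of the shape $\{\Ao\subset A'\}\cap\{\overline{\Bo^c}\subset B'\}$ where $A'$ is one of $\bigcap_{i\in\Lambda}U_i^c$ or $\bigcap_{i\in\Lambda}U_i^c\cap U_j^c$ (still closed), and similarly $B'$. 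By the hypothesis of Theorem \ref{mainstrong}, each such event lies in $\sigma(A'\cap B')$. Finally, since $A'\cap B'\subset A\cap B$ whenever $\bigcap_{i\in\Lambda}U_i^c\subset A$ and $\bigcap_{i\in\Lambda'}U_i^c\subset B$ (which holds on the surviving terms), monotonicity of $\sigma(\cdot)$ gives $\sigma(A'\cap B')\subset\sigma(A\cap B)$; and $\sigma$-fields are closed under the finite Boolean operations and the finite union over $\Lambda,\Lambda'$, so the whole event lies in $\sigma(A\cap B)$.

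The main obstacle I anticipate is purely bookkeeping: making sure that when I intersect the event $\{\Ao\subset\bigcap_{i\in\Lambda}U_i^c\}$ with the deterministic constraint $\bigcap_{i\in\Lambda}U_i^c\subset A$ the relevant closed set $A'$ appearing really does satisfy $A'\subset A$ (and symmetrically $B'\subset B$), so that the monotonicity step applies to $\sigma(A'\cap B')\subset\sigma(A\cap B)$ rather than to a larger set; and similarly that throwing in an extra factor $U_j^c$ (from the "$\Ao\subset U_j^c$" complement) only shrinks $A'$, hence preserves $A'\subset A$. One should also record the elementary facts $F^n\supset F^{n+1}$ and $F=\bigcap_n F^n$ asserted just before the lemma, though these are not needed for this particular statement — they will be used later when passing to the limit. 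Everything else is a routine manipulation of $\sigma$-fields using only that they are closed under countable set operations and monotone in the generating set, together with the single substantive input, the hypothesis of Theorem \ref{mainstrong}.
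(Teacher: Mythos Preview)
Your proposal is correct and follows essentially the same route as the paper: the paper first shows \(\{\Ao^n=A,\ \overline{\Bo^c}^n=B\}\in\sigma(A\cap B)\) for the finitely many admissible pairs \((A,B)\) by exactly the set-difference manipulation you describe (intersecting the complemented events back into the base event \(\{\Ao\subset A,\ \overline{\Bo^c}\subset B\}\) so that the resulting closed sets shrink to \(A\cap U_j^c\) and \(B\cap U_j^c\)), and then takes the finite union over \(\tilde A\subset A,\ \tilde B\subset B\) to get the subset event. Your parametrization by \((\Lambda,\Lambda')\) is just a relabeling of the same finite decomposition, and the ``obstacle'' you flag is precisely the point the paper handles by that intersection step.
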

\begin{proof}
If \(A\) or \(B\) is not the form of \(\bigcap _{i\in \Lambda} U_i^c\) for any \(\Lambda \subset \{1,\cdots,n\}\), then \(\{\Ao^n=A, {\overline{\Bo^c}}^n=B\}\) is the empty set and so is in \( \sigma(A\cap B)\). If \(A=\bigcap _{i\in \Lambda} U_i^c\) and \(\overline{\Bo^c}=\bigcap _{i\in \Sigma} U_i^c\) for some \(\Lambda, \Sigma \subset \{1,\cdots, n\}\), then we have
\begin{eqnarray*}
\{\!\Ao^n\!= \! A,\  \!{\overline{\Bo^c}}^n\!\!= B \}\!=\!\{\Ao\subset \!A,\  \overline{\Bo^c}\subset B\}\!\cap\! \bigcap_{j\not \in \Lambda}\{\Ao\not \subset A\cap U_j^c\}\! \cap \!\bigcap_{j\not \in \Sigma}\{\overline{\Bo^c}\not \subset B\cap U_j^c\}\hspace{8mm}\\
=\!\{\Ao\subset \!A,\  \overline{\Bo^c}\subset B\}\!\cap\! \bigcap_{j\not \in \Lambda} \left(\{\Ao\subset \!A,\  \overline{\Bo^c}\subset B\}\cap\{\Ao \subset A\cap U_j^c\}\right)^c \hspace{25mm}\\
\cap \!\bigcap_{j\not \in \Sigma}\left(\{\Ao\subset \!A,\  \overline{\Bo^c}\subset B\}\cap\{\overline{\Bo^c} \subset B\cap U_j^c\}\right)^c\hspace{15mm}\\
\!=\!\{\!\Ao\!\subset \!A,\!\  \overline{\Bo^c}\!\subset \!B\}\setminus \left(\bigcup_{j\not \in \Lambda}\{\!\Ao\!\subset \!A\!\cap U_j^c,\!\  \overline{\Bo^c}\!\subset B\!\}\cup\bigcup_{j\not \in \Sigma}\{\!\Ao\!\subset \!A,\!\  \overline{\Bo^c}\!\subset \! B\!\cap \! U_j^c\!\}\right).\hspace{6mm}\end{eqnarray*}
Hence we have
 \[\{\Ao^n\!= A,\  {\overline{\Bo^c}}^n\!\!= B \}
\in \sigma(A\cap B) \vee \bigvee_{j\not \in \Lambda}\sigma(A\cap U_j^c \cap B)\vee \bigvee_{j\not \in \Sigma}\sigma(A\cap B\cap U_j^c)\subset \sigma(A\cap B).\]
Moreover we have \(\{\Ao^n\!\subset A,\  {\overline{\Bo^c}}^n\!\!\subset B \}=\bigcup _{\tilde{A}\subset A,\ \tilde{B}\subset B}\{\Ao^n\!= \tilde{A},\  {\overline{\Bo^c}}^n\!\!= \tilde{B} \} \in \sigma(A\cap B)\) for any closed sets \(A,B.\)
\end{proof}
By this lemma and Remark \ref{remstop}, the following holds.
\begin{corollary}\label{stoppingn}
Under the assumption of Theorem \ref{mainstrong}, \(\Ao^n\) and \(\overline{\Bo^c}\) are stopping sets.
\end{corollary}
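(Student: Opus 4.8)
The plan is to read this off Lemma \ref{splitn} by the same device already used in Remark \ref{remstop}: since that lemma holds for \emph{all} pairs of closed sets, one may feed the whole space $E$ into it as one of its two arguments and so recover each component on its own.

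To show $\Ao^n$ is a stopping set I would fix a closed set $A$ and observe that, since ${\overline{\Bo^c}}^n$ is always a closed subset of $E$, one has $\{\Ao^n\subset A\}=\{\Ao^n\subset A,\ {\overline{\Bo^c}}^n\subset E\}$; Lemma \ref{splitn} applied to the pair $(A,E)$ then yields $\{\Ao^n\subset A\}\in\sigma(A\cap E)=\sigma(A)$, which is precisely the defining property of a stopping set. Symmetrically, fixing a closed set $B$ and using $\Ao^n\subset E$, one has $\{{\overline{\Bo^c}}^n\subset B\}=\{\Ao^n\subset E,\ {\overline{\Bo^c}}^n\subset B\}\in\sigma(E\cap B)=\sigma(B)$ by Lemma \ref{splitn}, so ${\overline{\Bo^c}}^n$ is a stopping set as well; that $\overline{\Bo^c}$ itself is a stopping set under the hypothesis of Theorem \ref{mainstrong} is already Remark \ref{remstop}.

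I do not expect any obstacle here: the whole argument is a one-line specialization of Lemma \ref{splitn}, and the only fact used beyond it is the trivial identity $\sigma(E\cap A)=\sigma(A)$, immediate from the definition of $\sigma(\cdot)$. If any point deserves care it is merely the bookkeeping that $E$ and $A\cap E$ behave as expected, which they plainly do.
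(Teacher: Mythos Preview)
Your argument is correct and is exactly the paper's own: the paper derives the corollary ``by this lemma and Remark \ref{remstop}'', i.e.\ by specializing Lemma \ref{splitn} with one argument equal to $E$ to handle $\Ao^n$, and invoking Remark \ref{remstop} for $\overline{\Bo^c}$. Your extra symmetric application showing $\overline{\Bo^c}^n$ is also a stopping set is not required by the statement as written but is harmless (and indeed anticipates Remark \ref{stoppingnn}).
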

\begin{remark}\label{stoppingnn}
The assumption of Theorem \ref{mainstrong} is not needed for \(\Ao^n\) to be a stopping set. By the similar proof as one of Lemma \ref{splitn}, we know it is sufficient to assume that \(\Ao\) is a stopping set. The same statement holds for \(\overline{\Bo^c}^n\).
\end{remark}

Next, we obtain the following lemma about the monotonicity.
\begin{lemma}\label{lemmadec}
If \(\Ao\) is a stopping set, then \(\sigma(\Ao^n)\) is decreasing to \(\sigma(\Ao).\)
\end{lemma}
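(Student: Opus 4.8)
The plan is to prove the two inclusions $\sigma(\Ao)\subset\sigma(\Ao^{n+1})\subset\sigma(\Ao^n)$ for every $n$, and then to show that the decreasing intersection $\bigcap_n\sigma(\Ao^n)$ coincides with $\sigma(\Ao)$. The containments $\sigma(\Ao)\subset\sigma(\Ao^{n})$ are easy: since $F\subset F^n$ for every closed set $F$, we have $\Ao\subset\Ao^n$ pointwise, hence $\{\Ao^n\subset A\}\subset\{\Ao\subset A\}$ for any closed $A$; more to the point, for any $\Gamma\in\sigma(\Ao)$ and any closed $A$, the set $\Gamma\cap\{\Ao^n\subset A\}$ should be rewritten so as to land in $\sigma(A)$. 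Since $\Ao^n$ takes only finitely many values, each of the form $\bigcap_{i\in\Lambda}U_i^c$ with $\Lambda\subset\{1,\dots,n\}$, I would decompose $\{\Ao^n\subset A\}$ into the finite union over those admissible values $\tilde A\subset A$ of the events $\{\Ao^n=\tilde A\}$, and on each such piece use that $\{\Ao^n=\tilde A\}$ is, up to events of the form $\{\Ao\subset \tilde A\cap U_j^c\}$, built from $\{\Ao\subset\tilde A\}$ (this is exactly the Boolean identity displayed in the proof of Lemma \ref{splitn}, now without the $\Bo$ factor, valid whenever $\Ao$ is a stopping set by Remark \ref{stoppingnn}). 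All the relevant atoms then lie in $\sigma(\tilde A)$ or $\sigma(\tilde A\cap U_j^c)\subset\sigma(\tilde A)\subset\sigma(A)$, and intersecting with $\Gamma\in\sigma(\Ao)\subset\sigma(\tilde A)$ keeps us in $\sigma(A)$. Hence $\Gamma\in\sigma(\Ao^n)$. The inclusion $\sigma(\Ao^{n+1})\subset\sigma(\Ao^n)$ follows the same way, using $\Ao^{n+1}\subset\Ao^n$ together with the fact that $\Ao^{n+1}$ is a stopping set, so that $\sigma(\Ao^n)$ plays the role that $\sigma(A)$ played above.

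For the reverse inclusion $\bigcap_n\sigma(\Ao^n)\subset\sigma(\Ao)$, the key point is that the generators of the right side, namely the indicator events $\Gamma\cap\{\Ao^n\subset A\}\in\sigma(A)$, converge to the generators of $\sigma(\Ao)$. Concretely, I would fix $\Gamma\in\bigcap_n\sigma(\Ao^n)$ and show $\Gamma\cap\{\Ao\subset A\}\in\sigma(A)$ for every closed $A$. Because $F=\bigcap_n F^n$ and the $F^n$ are decreasing, for a closed set $A$ one has $\{\Ao\subset A\}=\bigcap_n\{\Ao^n\subset A\}$? This is not literally true set-theoretically, so the cleaner route is: $\Ao^n\downarrow\Ao$ means $\{\Ao\subset A\}\supset\{\Ao^n\subset A\}$ and, conversely, if $\omega\in\{\Ao\subset A\}$ it need not satisfy $\Ao^n(\omega)\subset A$, since passing to $F^n$ enlarges the set. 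To handle this I would instead use that $\sigma(A)=\bigcap_{U\supset A,\ U\text{ open}}\sigma(\overline U)$ is not needed; rather, note $\{\Ao^n\subset A^n\}=\Omega$ is false too. The honest approach: since $\sigma(A)$ for closed $A$ is generated by $\{X_f:s(f)\subset A\}$ and $\mathscr N$, and since $s(f)\subset A$ with $A=\bigcap A^n$ forces $s(f)\subset A^n$ for all $n$, one gets $\sigma(A)=\bigcap_n\sigma(A^n)$ directly from the definition of the spectrum-generated $\sigma$-fields. Then $\Gamma\in\sigma(\Ao^n)$ gives $\Gamma\cap\{\Ao^n\subset A^n\}\in\sigma(A^n)$; but $\{\Ao^n\subset A^n\}\supset\{\Ao\subset A\}$ after noting $\Ao\subset A\Rightarrow\Ao^n\subset A^n$, and one can restrict to get $\Gamma\cap\{\Ao\subset A\}\in\sigma(A^n)$ for every $n$, hence in $\bigcap_n\sigma(A^n)=\sigma(A)$. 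Thus $\Gamma\in\sigma(\Ao)$.

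I expect the main obstacle to be the reverse inclusion, specifically making rigorous the interaction between the "discretization upward" operation $F\mapsto F^n$ (which enlarges sets) and the $\sigma$-field $\sigma(\cdot)$ (which is monotone in the same direction). The safe way to organize it is to prove first the auxiliary identity $\sigma(F)=\bigcap_n\sigma(F^n)$ for a deterministic closed set $F$ — immediate from $s(f)\subset F\iff s(f)\subset F^n\ \forall n$ when $F=\bigcap_n F^n$ and the $F^n$ decrease — and then lift it to the random case by the value-by-value decomposition of $\Ao^n$, exactly as in Lemma \ref{splitn}. One should also record the implication $\Ao\subset A\Rightarrow\Ao^n\subset A^n$ for closed $A$, which holds because $\Lambda^n_A\subset\Lambda^n_{\Ao}$ whenever $\Ao\subset A$, hence $\Ao^n=\bigcap_{i\in\Lambda^n_{\Ao}}U_i^c\subset\bigcap_{i\in\Lambda^n_A}U_i^c=A^n$. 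Everything else is routine $\sigma$-field bookkeeping, and the finiteness of the range of each $\Ao^n$ is what keeps all the Boolean manipulations legitimate.
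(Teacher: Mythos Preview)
Your overall strategy matches the paper's: establish the monotone chain $\sigma(\Ao)\subset\sigma(\Ao^{n+1})\subset\sigma(\Ao^n)$, then show $\bigcap_n\sigma(\Ao^n)\subset\sigma(\Ao)$ via the deterministic identity $\sigma(A)=\bigcap_n\sigma(A^n)$ together with $\{\Ao\subset A\}\subset\{\Ao^n\subset A^n\}$. Two comments, one cosmetic and one substantive.

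\textbf{Monotonicity.} Your value-by-value decomposition works, but it is heavier than needed, and the line ``intersecting with $\Gamma\in\sigma(\Ao)\subset\sigma(\tilde A)$'' is not correct as written: $\sigma(\Ao)$ is not contained in $\sigma(\tilde A)$. The paper dispatches this step in one line. Since $\Ao^{n+1}\subset\Ao^n$ implies $\{\Ao^n\subset A\}\subset\{\Ao^{n+1}\subset A\}$, for $\Gamma\in\sigma(\Ao^{n+1})$ one has
\[
\Gamma\cap\{\Ao^n\subset A\}=\bigl(\Gamma\cap\{\Ao^{n+1}\subset A\}\bigr)\cap\{\Ao^n\subset A\}\in\sigma(A),
\]
using only that $\Ao^n$ is a stopping set (Remark~\ref{stoppingnn}). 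The inclusion $\sigma(\Ao)\subset\sigma(\Ao^n)$ is identical with $\Ao$ in place of $\Ao^{n+1}$.

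\textbf{The real gap.} Your reverse-inclusion argument hinges on $\sigma(A)=\bigcap_n\sigma(A^n)$ for a deterministic closed $A$, which you call ``immediate from $s(f)\subset F\iff s(f)\subset F^n\ \forall n$''. That equivalence only says the \emph{generating families} coincide in the limit; it does \emph{not} yield equality of the $\sigma$-fields. In general $\bigcap_n\sigma(S_n)\neq\sigma\bigl(\bigcap_n S_n\bigr)$, and nothing in the bare definition of $\sigma(A)$ rules this out here. The nontrivial inclusion $\bigcap_n\sigma(A^n)\subset\sigma(A)$ requires an analytic input: one must show that the closed Gaussian subspace $\overline{\{X_f:s(f)\subset A^n\}}$ actually decreases to $\overline{\{X_f:s(f)\subset A\}}$, which amounts to a convergence statement for the hitting operators $\bH_{A^n}\tilde f\to\bH_A\tilde f$ in $\cE$. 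The paper does not reprove this; it invokes R\"ockner \cite[Theorem~5.6]{R}, with $\mu^A$ replaced by $\bH_A\tilde f$ for $f\in\cF_e$, to obtain $\sigma(A)=\bigcap_n\sigma(A^n)$. Once that identity is in hand, your remaining bookkeeping (using $\{\Ao\subset A\}\in\sigma(A)\subset\sigma(A^n)$ to pass from $\Gamma\cap\{\Ao^n\subset A^n\}\in\sigma(A^n)$ to $\Gamma\cap\{\Ao\subset A\}\in\sigma(A^n)$) is fine and is exactly what the paper does.
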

\begin{corollary}\label{cordec}
Under the assumption of Theorem \ref{mainstrong}, \(\sigma(\Ao^n)\) is decreasing to \(\sigma(\Ao)\), and \(\sigma(\overline{\Bo^c}^n)\) is decreasing to \(\sigma(\overline{\Bo^c}).\)
\end{corollary}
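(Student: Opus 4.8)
Since Corollary \ref{cordec} follows at once from Lemma \ref{lemmadec}, I would first dispose of it and then put the effort into the lemma. Under the hypotheses of Theorem \ref{mainstrong}, both $\Ao$ and $\overline{\Bo^c}$ are stopping sets (Remark \ref{remstop}); as the statement and the proof of Lemma \ref{lemmadec} use nothing about $\Ao$ beyond its being a stopping set, applying the lemma to $\Ao$ gives that $\sigma(\Ao^n)$ decreases to $\sigma(\Ao)$, and applying it to $\overline{\Bo^c}$ (with $\overline{\Bo^c}^n$ in place of $\Ao^n$) gives that $\sigma(\overline{\Bo^c}^n)$ decreases to $\sigma(\overline{\Bo^c})$. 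So the real content is Lemma \ref{lemmadec}, which I would prove in two steps: the monotonicity of $\{\sigma(\Ao^n)\}_n$ together with $\sigma(\Ao)\subset\bigcap_n\sigma(\Ao^n)$, and then the reverse inclusion.

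For the monotonicity, I would first record that for any stopping set $S$ the collection $\{\Gamma\in\mathscr{M}:\Gamma\cap\{S\subset A\}\in\sigma(A)\text{ for every closed }A\}$ is already a $\sigma$-field (it contains $\Omega$ because $S$ is a stopping set, and it is stable under complements and countable unions), so the outer $\sigma(\cdot)$ in Definition \ref{defstrongfil} is redundant. Consequently, if $S\subset S'$ are stopping sets then $\sigma(S)\subset\sigma(S')$, since for $\Gamma\in\sigma(S)$ and $A$ closed one has $\Gamma\cap\{S'\subset A\}=(\Gamma\cap\{S\subset A\})\cap\{S'\subset A\}\in\sigma(A)$ using $\{S'\subset A\}\subset\{S\subset A\}$. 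Applied along the chain $\Ao\subset\cdots\subset\Ao^{n+1}\subset\Ao^n$ (whose members are stopping sets by Remark \ref{stoppingnn}), this gives $\sigma(\Ao)\subset\sigma(\Ao^{n+1})\subset\sigma(\Ao^n)$ for every $n$, hence the asserted monotonicity and $\sigma(\Ao)\subset\bigcap_n\sigma(\Ao^n)$.

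For the reverse inclusion, fix $\Gamma\in\bigcap_n\sigma(\Ao^n)$ and a closed set $A$; I must show $\Gamma\cap\{\Ao\subset A\}\in\sigma(A)$. Write $A^c=\bigcup_{i\in I}U_i$ with the $U_i$ from the fixed countable basis, and set $A_n:=\bigcap_{i\in I,\,i\le n}U_i^c$, so that each $A_n$ is closed, of the special form defining $\Ao^n$, with $A_n\downarrow A$ and $A_n^c\uparrow A^c$. Since for $i\le n$ one has $\Ao^n\subset U_i^c\iff\Ao\subset U_i^c$ (if $\Ao\subset U_i^c$ then $i\in\Lambda^n_\Ao$, so $\Ao^n\subset U_i^c$; conversely $\Ao\subset\Ao^n$), it follows that $\{\Ao^n\subset A_n\}=\{\Ao\subset A_n\}$, hence $\Gamma\cap\{\Ao\subset A_n\}=\Gamma\cap\{\Ao^n\subset A_n\}\in\sigma(A_n)$. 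As $A=\bigcap_nA_n$, this yields $\Gamma\cap\{\Ao\subset A\}=\bigcap_n\bigl(\Gamma\cap\{\Ao^n\subset A_n\}\bigr)\in\bigcap_n\sigma(A_n)$, and the lemma is reduced to the deterministic statement that $\bigcap_n\sigma(A_n)=\sigma(A)$ whenever $A_n\downarrow A$ are closed sets.

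This deterministic continuity-from-above is where I expect the main difficulty to lie. I would argue in the Gaussian Hilbert space $\mathcal{H}:=\overline{\{X_f:f\in\cF_e\}}$ in $L^2(\mathbb{P})$, which equals $\{X_f:f\in\cF_e\}$ by completeness of $(\cF_e,\cE)$ modulo $\cE$-null functions. For closed $F$ put $\mathcal{H}_F:=\{X_f:f\in\cF_e,\ s(f)\subset F\}$; by \cite[Theorem 2.3.3]{FOT} this is the orthogonal complement in $\mathcal{H}$ of $\{X_h:h\in C_c(F^c)\cap\cF\}$, hence a closed subspace with $\sigma(F)=\sigma(\mathcal{H}_F)\vee\mathscr{N}$. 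The key observation is $\bigcap_n\mathcal{H}_{A_n}=\mathcal{H}_A$: if $X_g\in\mathcal{H}_{A_n}$ for all $n$, then $\cE(g,h)=0$ for every $h\in C_c(A_n^c)\cap\cF$ and every $n$, and since any $h\in C_c(A^c)\cap\cF$ has compact support contained in the increasing open union $A^c=\bigcup_nA_n^c$, hence in some $A_n^c$, one gets $\cE(g,h)=0$ for all $h\in C_c(A^c)\cap\cF$, that is $s(g)\subset A$ again by \cite[Theorem 2.3.3]{FOT}. Finally, using the Wiener--It\^o chaos decomposition $L^2(\sigma(F))=\bigoplus_k\mathcal{H}_F^{\odot k}$ and the fact that $\E[\,\cdot\mid\sigma(F)]$ acts on the $k$-th chaos of $\mathcal{H}$ as the $k$-th symmetric tensor power of the projection onto $\mathcal{H}_F$, I would decompose $\mathbf{1}_\Gamma=\sum_k\psi_k$: for $\Gamma\in\bigcap_n\sigma(A_n)$ each $\psi_k$ lies in $\mathcal{H}_{A_n}^{\odot k}$ for every $n$, hence in $\bigcap_n\mathcal{H}_{A_n}^{\odot k}=\mathcal{H}_A^{\odot k}$ (decreasing orthogonal projections, and their symmetric tensor powers, converge strongly), so $\mathbf{1}_\Gamma\in L^2(\sigma(A))$ and $\Gamma\in\sigma(A)$. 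The parts requiring the most care in the write-up are the completeness of the (possibly recurrent) extended Dirichlet space used to realise every element of $\mathcal{H}$ as $X_f$ with $f\in\cF_e$, and the standard Gaussian facts about chaos decompositions and conditional expectations invoked in the last step.
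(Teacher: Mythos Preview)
Your reduction of the corollary to Lemma~\ref{lemmadec} via Remark~\ref{remstop}, and your proof of the lemma, match the paper's argument step for step: the monotonicity is obtained exactly as in the paper from $\Ao\subset\Ao^{n+1}\subset\Ao^n$, and for the reverse inclusion you use the very same identity $\{\Ao\subset A\}=\bigcap_n\{\Ao^n\subset A^n\}$ (your $A_n$ is precisely the paper's $A^n$, since $\{i\le n:i\in I\}=\Lambda^n_A$) to reduce to the deterministic statement $\bigcap_n\sigma(A^n)=\sigma(A)$.

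The only genuine divergence is how that deterministic continuity is established. The paper does not prove it from scratch but invokes R\"ockner \cite[Theorem~5.6]{R}, noting that in the possibly recurrent setting one replaces R\"ockner's potentials $\mu^A$ by the hitting operators $\bH_Af$. You instead give a self-contained Gaussian-space argument: identify $\sigma(F)$ with the $\sigma$-field generated by the closed subspace $\mathcal{H}_F=\{X_f:s(f)\subset F\}$, show $\bigcap_n\mathcal{H}_{A_n}=\mathcal{H}_A$ by a compact-support exhaustion, and pass to $\sigma$-fields via the Wiener--It\^o chaos decomposition together with strong convergence of decreasing projections and their symmetric tensor powers. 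Both routes are correct; yours avoids the external reference and the need to adapt R\"ockner's transient argument, at the cost of importing the chaos machinery (which is standard). The one point to keep explicit in the write-up is the completeness of $\cF_e$ modulo $\cE$-null functions in the irreducible recurrent case, so that every element of $\mathcal{H}$ is of the form $X_f$; you already flag this.
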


\begin{proof}[Proof of Lemma \ref{lemmadec}]
For any \(n\) and \(\Gamma \in \sigma(\Ao^{n+1})\), since \(\Ao^{n+1}\subset \Ao^n\) and \(\Ao^n\) is a stopping set,  we have \(\Gamma \cap \{\Ao^n\subset A\}=(\Gamma \cap \{\Ao^{n+1}\subset A\})\cap \{\Ao^n \subset A\}\in \sigma(A).\) So we have \(\sigma(\Ao^{n+1})\subset\sigma(\Ao^n).\) By the same way, we have \(\sigma(\Ao)\subset\sigma(\Ao^n)\) for any \(n,\) and we have \(\sigma(\Ao)\subset \bigcap_{n=1}^{\infty} \sigma(\Ao^n).\) Conversely, for \(\Gamma \in \bigcap_{n=1}^{\infty}\sigma(\Ao^n)\) and any closed set \(A\), we have \(\Gamma \cap \{\Ao \subset A\}=\Gamma \cap \{\Ao^n \subset A^n\}\in \sigma(A^n)\) and \(\Gamma \cap \{\Ao \subset A\} \in \bigcap_{n=1}^{\infty}\sigma(A^n)\). By the similar proof of \cite[Theorem 5.6.]{R} with \(\mu^A\) replaced by \(\bH_{A}f\) for \(f\in \cF_e\), it holds that \(\sigma(A)=\bigcap_{n=1}^{\infty}\sigma(A^n)\) and we have \(\Gamma \cap \{\Ao \subset A\}\in \sigma(A)\).
\end{proof}

In the following lemmas, we show that other types of definitions about \(\sigma\)-fields are the same as what we defined.
\begin{lemma}\label{lemmadisc}
If \(\Ao\) is a discrete stopping set, then it holds that 
\begin{eqnarray*}
\sigma(\Ao)&=&\sigma(\Gamma\ ;\ \Gamma \cap \{\Ao=A\}\in \sigma(A) {\rm \ for\ any\ closed\ set\ }A)\\
&=&\bigvee_{A:{\rm\ closed}}\sigma(\Gamma \cap \{\Ao=A\}\ ;\ \Gamma \in \sigma (A)).
\end{eqnarray*}
\end{lemma}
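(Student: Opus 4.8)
The plan is to establish the chain of inclusions $\mathcal{G}_1\subset\mathcal{G}_2\subset\mathcal{G}_3\subset\mathcal{G}_1$, where $\mathcal{G}_1:=\sigma(\Ao)$ is the $\sigma$-field of Definition \ref{defstrongfil}, $\mathcal{G}_2$ is the middle $\sigma$-field in the statement, and $\mathcal{G}_3$ the last one. Since $\Ao$ is discrete, I would first fix an enumeration $\{A^{(j)}\}_{j\ge 1}$ of its distinct closed values, so that $\Omega=\bigsqcup_{j}\{\Ao=A^{(j)}\}$, and record the elementary identity
\[
\{\Ao=A^{(j)}\}=\{\Ao\subset A^{(j)}\}\setminus\bigcup_{k:\,A^{(k)}\subsetneq A^{(j)}}\{\Ao\subset A^{(k)}\},
\]
which holds because any value of $\Ao$ that is strictly contained in $A^{(j)}$ is in particular contained in itself. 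This exhibits each $\{\Ao=A\}$ as a countable Boolean combination of events of the form $\{\Ao\subset\,\cdot\,\}$, hence measurable; and since $\Ao$ is a stopping set and $\sigma(A')\subset\sigma(A)$ whenever $A'\subset A$ (every generator $X_f$ with $s(f)\subset A'$ also satisfies $s(f)\subset A$), the identity gives $\{\Ao=A\}\in\sigma(A)$ for every closed $A$. Whenever $A$ is not a value of $\Ao$ one has $\{\Ao=A\}=\emptyset$, so all assertions below are trivial and one may assume $A=A^{(j)}$.

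For $\mathcal{G}_1\subset\mathcal{G}_2$ I would take $\Gamma$ with $\Gamma\cap\{\Ao\subset A\}\in\sigma(A)$ for every closed $A$, apply this to $A=A^{(j)}$ and to each $A^{(k)}\subsetneq A^{(j)}$, use $\sigma(A^{(k)})\subset\sigma(A^{(j)})$, and conclude from the displayed identity that $\Gamma\cap\{\Ao=A^{(j)}\}\in\sigma(A^{(j)})$, so $\Gamma$ lies in the generating class of $\mathcal{G}_2$. For $\mathcal{G}_2\subset\mathcal{G}_3$ I would take $\Gamma$ with $\Gamma\cap\{\Ao=A\}\in\sigma(A)$ for all closed $A$ and write $\Gamma=\bigsqcup_j\bigl(\Gamma\cap\{\Ao=A^{(j)}\}\bigr)$, a countable disjoint union whose $j$-th term belongs to $\{\Gamma'\cap\{\Ao=A^{(j)}\}:\Gamma'\in\sigma(A^{(j)})\}\subset\mathcal{G}_3$. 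For $\mathcal{G}_3\subset\mathcal{G}_1$ it suffices to show that $\Gamma'\cap\{\Ao=A\}\in\mathcal{G}_1$ for every closed $A$ and every $\Gamma'\in\sigma(A)$, i.e. that $\bigl(\Gamma'\cap\{\Ao=A\}\bigr)\cap\{\Ao\subset B\}\in\sigma(B)$ for every closed $B$: if $A\not\subset B$ this intersection is empty, and if $A\subset B$ it equals $\Gamma'\cap\{\Ao=A\}$, which lies in $\sigma(A)\subset\sigma(B)$ because $\Gamma'\in\sigma(A)$ and $\{\Ao=A\}\in\sigma(A)$ by the first paragraph.

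Chaining the three inclusions then yields $\mathcal{G}_1=\mathcal{G}_2=\mathcal{G}_3$, which is the claim. I do not expect a genuine obstacle here; the only points that need care are the discreteness identity above, the monotonicity $\sigma(A')\subset\sigma(A)$ for $A'\subset A$, and the harmless reduction to closed sets that are actual values of $\Ao$.
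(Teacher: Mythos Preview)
Your proof is correct and follows essentially the same approach as the paper. The paper proves the two equalities separately (showing $\mathcal G_1=\mathcal G_2$ by both inclusions, then $\mathcal G_1\subset\mathcal G_3$ and $\mathcal G_3\subset\mathcal G_2$), whereas you run the cyclic chain $\mathcal G_1\subset\mathcal G_2\subset\mathcal G_3\subset\mathcal G_1$; the key ingredients---the discreteness identity expressing $\{\Ao=A^{(j)}\}$ in terms of events $\{\Ao\subset\cdot\}$, the decomposition $\Gamma=\bigsqcup_j(\Gamma\cap\{\Ao=A^{(j)}\})$, and the case analysis on whether $A\subset B$---are identical.
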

\begin{proof}
Set \(\Ao=\sum_{i=1}^{\infty} {\bf 1}_{\Gamma_i}A_i\) for \(\Gamma_i \subset \mathscr{M}\) which are pairwise disjoint, and closed sets \(A_i\subset E\). We show the first equality. For \(\Gamma \in \sigma(\Ao)\) and any closed set \(A\), we have
\[\Gamma \cap \{\Ao= A\}=(\Gamma \cap \{\Ao\subset A\})\setminus \bigcup_{A_i \subsetneq A}(\Gamma \cap \{\Ao\subset A_i\})\in \sigma(A).\] 
Conversely, take \(\Gamma\) satisfying \(\Gamma \cap \{\Ao=A\}\in \sigma(A)\) for any closed set \(A\). For any closed set \(A,\) we have 
\[\Gamma \cap \{\Ao \subset A\}=\bigcup_{A_i\subset A}(\Gamma \cap \{\Ao =A_i\})\in \sigma(A).\] So the first equality holds.\\
Next we show the second equality. For \(\Gamma \in \sigma(\Ao),\) we have \[\Gamma=\bigcup_{i=1}^{\infty}\left((\Gamma\cap \{\Ao=A_i\})\cap\{\Ao=A_i\}\right),\] and so we have \[\sigma(\Ao)\subset \bigvee_{A:{\rm\ closed}}\sigma(\Gamma \cap \{\Ao=A\}\ ;\ \Gamma \in \sigma (A)).\]
 Conversely, fix any closed set \(A\) and \(\Gamma \in \sigma(A).\) If \(A=A_i=B\) for some \(i,\) \(\Gamma \cap \{\Ao=A\}\cap \{\Ao=B\}\) is equal to \(\Gamma \cap \{\Ao=B\}\in \sigma(B)\). Otherwise it is the empty set. Hence we have \(\sigma(\Ao)\supset \bigvee_{A:{\rm\ closed}}\sigma(\Gamma \cap \{\Ao=A\}\ ;\ \Gamma \in \sigma (A))\). Thus the second equality holds. 
\end{proof}

\begin{corollary}\label{cordisc2}
Define \begin{eqnarray*}
\tilde{\sigma}(\Ao^n):=\bigvee_{A:{\rm\ closed}}\sigma(\Gamma \cap \{\Ao^n=A\}\ ;\ \Gamma \in \sigma (A)),\\
\tilde{\sigma}(\overline{\Bo^c}^n):=\bigvee_{A:{\rm\ closed}}\sigma(\Gamma \cap \{\overline{\Bo^c}^n=A\}\ ;\ \Gamma \in \sigma (A)).
\end{eqnarray*}
Under the assumption of Theorem \ref{mainstrong}, it holds that \(\sigma(\Ao^n)= \tilde{\sigma}(\Ao^n) \) and \(\sigma(\overline{\Bo^c}^n)=\tilde{\sigma}(\overline{\Bo^c}^n)\).
\end{corollary}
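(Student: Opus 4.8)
The statement to prove is Corollary \ref{cordisc2}: that the two descriptions of the $\sigma$-fields attached to the discrete random sets $\Ao^n$ and $\overline{\Bo^c}^n$ coincide, i.e.\ $\sigma(\Ao^n)=\tilde{\sigma}(\Ao^n)$ and $\sigma(\overline{\Bo^c}^n)=\tilde{\sigma}(\overline{\Bo^c}^n)$. This is meant to be read off immediately from Lemma \ref{lemmadisc}, so the plan is essentially to check that its hypotheses are met. The key point is that $\Ao^n$ and $\overline{\Bo^c}^n$ are \emph{discrete} random sets and \emph{stopping sets}; once both facts are in hand, Lemma \ref{lemmadisc} (its second displayed equality) gives exactly the asserted identity, with no further computation.

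First I would verify discreteness. By construction $\Ao^n=\bigcap_{i\in\Lambda^n_{\Ao}}U_i^c$ where $\Lambda^n_{\Ao}\subset\{1,\dots,n\}$, so $\Ao^n$ takes values among the finitely many closed sets of the form $\bigcap_{i\in\Lambda}U_i^c$ with $\Lambda\subset\{1,\dots,n\}$; the preimage of each such value is measurable in $\mathscr{M}$ because $\Ao$ is a measurable random set. Hence $\Ao^n=\sum_{j}\mathbf{1}_{\Gamma_j}A_j$ is a (finite, hence discrete) random set, and likewise for $\overline{\Bo^c}^n$. Second I would invoke the stopping-set property: under the assumption of Theorem \ref{mainstrong}, Corollary \ref{stoppingn} already states that $\Ao^n$ (and, by Lemma \ref{splitn} applied with $B=E$ or $A=E$, also $\overline{\Bo^c}^n$) is a stopping set. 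With discreteness and the stopping-set property established, Lemma \ref{lemmadisc} applies to both $\Ao^n$ and $\overline{\Bo^c}^n$, and the second equality in that lemma is precisely the definition of $\tilde{\sigma}(\Ao^n)$ and $\tilde{\sigma}(\overline{\Bo^c}^n)$; this completes the proof.

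There is essentially no serious obstacle here: the corollary is a packaging of Lemma \ref{lemmadisc}, and the only thing to be careful about is citing the right prior results for the two standing hypotheses (discreteness, which is immediate from the finite range of $\Lambda^n_F$, and the stopping-set property, which is Corollary \ref{stoppingn} together with the symmetric version for $\overline{\Bo^c}^n$). If one wanted to be fully self-contained one could spell out that $\{\overline{\Bo^c}^n\subset B\}\in\sigma(B)$ follows from Lemma \ref{splitn} by taking $A=E$, exactly as in Remark \ref{remstop}, but even that is a one-line remark rather than a genuine step.
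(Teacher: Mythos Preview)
Your proposal is correct and matches the paper's intent: the corollary is stated without proof precisely because it is an immediate application of Lemma~\ref{lemmadisc} to the discrete stopping sets $\Ao^n$ and $\overline{\Bo^c}^n$, with discreteness clear from the finite range $\Lambda\subset\{1,\dots,n\}$ and the stopping-set property supplied by Corollary~\ref{stoppingn} (via Lemma~\ref{splitn}). There is nothing further to add.
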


\begin{remark}
For the case of set-indexed processes, Evstigneev, Ovseevi\v{c} \cite{EO} and Balan \cite{B} used the definitions of the forms \(\tilde{\sigma}(\Ao^n)\) and \(\tilde{\sigma}(\overline{\Bo^c}^n)\).
\end{remark}

In Lemma \ref{lemmadec} and \ref{lemmadisc}, it is only assumed that \(\Ao\) and \(\overline{\Bo^c}\) are stopping sets. As in Remark \ref{stoppingnn}, if \(\Ao\) and \(\overline{\Bo^c}\) are simply stopping sets, \(\Ao^n\) and \(\overline{\Bo^c}^n\) are also stopping sets, so Corollary \ref{cordec}, \ref{cordisc2} also hold. On the other hand, the assumption \(\{\Ao\subset A,\  \overline{{\Bo}^c}\subset B \}\in \sigma(A\cap B)\) in Theorem \ref{mainstrong} is needed to prove the following lemma.
\begin{lemma}\label{lemmaAB}
Under the assumption of Theorem \ref{mainstrong}, it holds that
\[\tilde{\sigma}(\Ao^n\cap\overline{\Bo^c}^n) \subset \sigma(\Ao^n\cap\overline{\Bo^c}^n),\]
where \[\tilde{\sigma}(\Ao^n\cap\overline{\Bo^c}^n):= \bigvee_{A,B:{\rm closed}}\sigma\left(\Gamma\cap \{\Ao^n=A, \overline{\Bo^c}^n=B\}\ ;\ \Gamma\in \sigma(A\cap B)\right).\]
\end{lemma}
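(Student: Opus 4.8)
The plan is to check the asserted inclusion on generators. By the definition of $\tilde{\sigma}(\Ao^n\cap\overline{\Bo^c}^n)$ it suffices to fix closed sets $A,B$ and a set $\Gamma\in\sigma(A\cap B)$ and to prove that
\[\Gamma\cap\{\Ao^n=A,\ \overline{\Bo^c}^n=B\}\in\sigma(\Ao^n\cap\overline{\Bo^c}^n)=\sigma(\Ao^n)\cap\sigma(\overline{\Bo^c}^n),\]
the last equality being Definition \ref{defstrongfil}. The only nontrivial input is Lemma \ref{splitn}: its proof shows $\{\Ao^n=A,\ \overline{\Bo^c}^n=B\}\in\sigma(A\cap B)$, so the set $\Gamma\cap\{\Ao^n=A,\ \overline{\Bo^c}^n=B\}$ also lies in $\sigma(A\cap B)$. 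Apart from this I only use the elementary monotonicity $\sigma(C)\subset\sigma(D)$ whenever $C\subset D$, which is immediate from the definition of the spectrum and is already used freely in Section \ref{Markov}.

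To see that $\Gamma\cap\{\Ao^n=A,\ \overline{\Bo^c}^n=B\}$ belongs to $\sigma(\Ao^n)$, I would verify that it lies in the generating family of Definition \ref{defstrongfil}, i.e. that its intersection with $\{\Ao^n\subset A'\}$ is in $\sigma(A')$ for every closed set $A'$. Since $\Ao^n$ is constantly equal to $A$ on $\{\Ao^n=A,\ \overline{\Bo^c}^n=B\}$, this intersection equals $\Gamma\cap\{\Ao^n=A,\ \overline{\Bo^c}^n=B\}$ when $A\subset A'$ and equals $\emptyset$ otherwise; in the first case $A\cap B\subset A\subset A'$, so $\Gamma\cap\{\Ao^n=A,\ \overline{\Bo^c}^n=B\}\in\sigma(A\cap B)\subset\sigma(A')$, and in the second case the claim is trivial. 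Hence $\Gamma\cap\{\Ao^n=A,\ \overline{\Bo^c}^n=B\}\in\sigma(\Ao^n)$. Interchanging the roles of $\Ao^n$ and $\overline{\Bo^c}^n$ (and of $A$ and $B$, using $A\cap B\subset B\subset B'$ for a closed set $B'$) gives in exactly the same way $\Gamma\cap\{\Ao^n=A,\ \overline{\Bo^c}^n=B\}\in\sigma(\overline{\Bo^c}^n)$. Combining the two memberships and taking the $\sigma$-field generated over all such $A,B,\Gamma$ yields $\tilde{\sigma}(\Ao^n\cap\overline{\Bo^c}^n)\subset\sigma(\Ao^n)\cap\sigma(\overline{\Bo^c}^n)$, which is the assertion.

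I do not expect a genuine obstacle here: all the real work is carried out in Lemma \ref{splitn}, and what remains is a short case analysis on whether $A\subset A'$ and whether $B\subset B'$. The only point requiring care is notational discipline, namely keeping the events $\{\Ao^n=A,\ \overline{\Bo^c}^n=B\}$ (which Lemma \ref{splitn} controls) distinct from the events $\{\Ao^n\subset A'\}$ and $\{\overline{\Bo^c}^n\subset B'\}$ that enter the definitions of $\sigma(\Ao^n)$ and $\sigma(\overline{\Bo^c}^n)$; it is precisely the discreteness of the approximating sets $\Ao^n,\overline{\Bo^c}^n$ that lets one pass between them.
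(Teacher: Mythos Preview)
Your argument is correct and mirrors the paper's proof: both fix generators $\Gamma\cap\{\Ao^n=A,\ \overline{\Bo^c}^n=B\}$, use the computation inside the proof of Lemma~\ref{splitn} to place this set in $\sigma(A\cap B)$, and then do a two-case analysis to check membership in $\sigma(\Ao^n)$ and in $\sigma(\overline{\Bo^c}^n)$ separately. The only cosmetic difference is that the paper tests against events $\{\Ao^n=\tilde A\}$ via the first equality of Lemma~\ref{lemmadisc}, whereas you test directly against $\{\Ao^n\subset A'\}$ from Definition~\ref{defstrongfil}; both routes amount to the same case split.
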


\begin{proof}
For any closed sets \(A,B\) and \(\Gamma \in \sigma(A\cap B),\) by Lemma \ref{splitn}, we have \(\Gamma\cap \{\Ao^n=A, \overline{\Bo^c}^n=B\}\in \sigma(A\cap B).\) If \(A=\tilde{A}\), we have 
\[\Gamma\cap \{\Ao^n=A, \overline{\Bo^c}^n=B\} \cap \{\Ao^n=\tilde{A}\}=\Gamma\cap \{\Ao^n=\tilde{A}, \overline{\Bo^c}^n=B\} \in \sigma(\tilde{A}).\] 
Otherwise it is the empty set. Hence we have \(\tilde{\sigma}(\Ao^n\cap\overline{\Bo^c}^n)\subset \sigma(\Ao^n)\), and by the same way as above, it hods that \(\tilde{\sigma}(\Ao^n\cap\overline{\Bo^c}^n) \subset \sigma(\overline{\Bo^c}^n)\).
\end{proof}

\begin{remark}
In the set-indexed processes, Evstigneev \cite{E1,E2} and Kinateder \cite{K} do not assumed the correspondence to \(\{\Ao^n\subset A,\  {\overline{\Bo^c}}^n\subset B \}\in \sigma(A\cap B)\).
However, as in Remark \ref{remev}, they defined the correspondence to \(\sigma(\Ao\cap \overline{{\Bo}^c})\) to include the \(\sigma\)-field generated by \(\Ao\) and \(\Bo\). Because of this definition, the correspondence to \(\tilde{\sigma}(\Ao^n\cap\overline{\Bo^c}^n)\) is decreasing for \(n\) under the only assumption that \(\Ao\) and \(\overline{\Bo^c}\) are stopping sets. So, by taking limit, the strong Markov property follows from the strong Markov properties for approximating sequences.
\end{remark}

\begin{proposition}\label{indpendentn}
Under the assumption of Theorem \ref{mainstrong}, \(\sigma(\Ao^n)\) is conditionally independent of \(\sigma(\overline{\Bo^c}^n)\) given \(\sigma(\Ao^n\cap\overline{\Bo^c}^n).\)
\end{proposition}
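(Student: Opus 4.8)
The plan is to reduce the assertion to the deterministic Markov property $(3^*)$ of Theorem \ref{main2} by partitioning $\Omega$ according to the finitely many joint values of the discrete stopping sets $\Ao^n$ and $\overline{\Bo^c}^n$, and then ``localizing'' the conditional expectations to the corresponding deterministic $\sigma$-fields on each piece.

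First I would assemble the ingredients. Each of $\Ao^n$, $\overline{\Bo^c}^n$ takes values in a finite family of closed sets (those of the form $\bigcap_{i\in\Lambda}U_i^c$ with $\Lambda\subset\{1,\dots,n\}$), and both are stopping sets by Corollary \ref{stoppingn} and Remark \ref{stoppingnn}; by Lemma \ref{lemmadisc} the collection of $\Gamma\in\mathscr{M}$ with $\Gamma\cap\{\Ao^n=A\}\in\sigma(A)$ for every closed $A$ is a $\sigma$-field equal to $\sigma(\Ao^n)$, and likewise for $\overline{\Bo^c}^n$. Enumerating the pairs of values $(A^{(i)},B^{(j)})$ and setting $\Omega_{ij}:=\{\Ao^n=A^{(i)},\ \overline{\Bo^c}^n=B^{(j)}\}$, these sets form a finite measurable partition of $\Omega$; by Lemma \ref{splitn}, $\Omega_{ij}\in\sigma(A^{(i)}\cap B^{(j)})$, and a direct check of the defining property in Definition \ref{defstrongfil} then gives $\Omega_{ij}\in\sigma(\Ao^n)\cap\sigma(\overline{\Bo^c}^n)=:\mathcal{H}$, the $\sigma$-field appearing in the statement. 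The crucial observation is that $\Bo\subset\Ao$ forces $\Ao\cup\overline{\Bo^c}=E$, hence (since $F\subset F^n$ for each closed $F$) $\Ao^n\cup\overline{\Bo^c}^n=E$; so whenever $\mathbb{P}(\Omega_{ij})>0$ we have $A^{(i)}\cup B^{(j)}=E$, i.e.\ $(B^{(j)})^c\subset A^{(i)}$, and applying $(3^*)$ with the closed set $A^{(i)}$ and the open set $(B^{(j)})^c$ yields that $\sigma(A^{(i)})$ is conditionally independent of $\sigma(B^{(j)})$ given $\sigma(A^{(i)}\cap B^{(j)})$.

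The core of the proof is to show, for every $\Gamma\in\sigma(\Ao^n)$, that $\mathbb{E}[\mathbf{1}_\Gamma\mid\sigma(\overline{\Bo^c}^n)]$ is $\mathcal{H}$-measurable; since $\mathcal{H}\subset\sigma(\overline{\Bo^c}^n)$, this is equivalent (by the usual tower-property argument) to the asserted conditional independence of $\sigma(\Ao^n)$ and $\sigma(\overline{\Bo^c}^n)$ given $\mathcal{H}$. Writing $\mathbf{1}_\Gamma=\sum_{i,j}\mathbf{1}_{\Gamma\cap\Omega_{ij}}$ and using $\{\overline{\Bo^c}^n=B^{(j)}\}\in\sigma(\overline{\Bo^c}^n)$, one gets $\mathbb{E}[\mathbf{1}_\Gamma\mid\sigma(\overline{\Bo^c}^n)]=\sum_{i,j}\mathbb{E}[\mathbf{1}_{\Gamma\cap\Omega_{ij}}\mid\sigma(\overline{\Bo^c}^n)]$, the terms with $\mathbb{P}(\Omega_{ij})=0$ vanishing. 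For each remaining $(i,j)$: one checks $\Gamma\cap\Omega_{ij}=(\Gamma\cap\{\Ao^n=A^{(i)}\})\cap\Omega_{ij}\in\sigma(A^{(i)})$, using $\Omega_{ij}\in\sigma(A^{(i)}\cap B^{(j)})\subset\sigma(A^{(i)})$; since $\Gamma\cap\Omega_{ij}$ vanishes off $\{\overline{\Bo^c}^n=B^{(j)}\}$ and the traces of $\sigma(\overline{\Bo^c}^n)$ and of $\sigma(B^{(j)})$ on that event coincide, $\mathbb{E}[\mathbf{1}_{\Gamma\cap\Omega_{ij}}\mid\sigma(\overline{\Bo^c}^n)]=\mathbb{E}[\mathbf{1}_{\Gamma\cap\Omega_{ij}}\mid\sigma(B^{(j)})]$; and the deterministic conditional independence above, applied to the $\sigma(A^{(i)})$-measurable $\mathbf{1}_{\Gamma\cap\Omega_{ij}}$, turns this into $\mathbb{E}[\mathbf{1}_{\Gamma\cap\Omega_{ij}}\mid\sigma(A^{(i)}\cap B^{(j)})]=\mathbf{1}_{\Omega_{ij}}g_{ij}$ for some $\sigma(A^{(i)}\cap B^{(j)})$-measurable $g_{ij}$ (as $\Omega_{ij}\in\sigma(A^{(i)}\cap B^{(j)})$). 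A short check --- using $\Omega_{ij}\subset\{\Ao^n=A^{(i)}\}\cap\{\overline{\Bo^c}^n=B^{(j)}\}$, the inclusions $\sigma(A^{(i)}\cap B^{(j)})\subset\sigma(A^{(i)})$ and $\sigma(A^{(i)}\cap B^{(j)})\subset\sigma(B^{(j)})$, and $\Omega_{ij}\in\mathcal{H}$ --- shows $\mathbf{1}_{\Omega_{ij}}g_{ij}$ is both $\sigma(\Ao^n)$- and $\sigma(\overline{\Bo^c}^n)$-measurable, hence $\mathcal{H}$-measurable; summing over $(i,j)$ concludes.

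I expect the main obstacle to be the localization step in the third paragraph: identifying the conditional expectation with respect to the random $\sigma$-field $\sigma(\overline{\Bo^c}^n)$ first with that relative to the deterministic $\sigma(B^{(j)})$ (via the coincidence of trace $\sigma$-fields on $\{\overline{\Bo^c}^n=B^{(j)}\}$) and then with that relative to $\sigma(A^{(i)}\cap B^{(j)})$ --- the latter being exactly where the Markov property $(3^*)$, i.e.\ locality, is used --- together with the bookkeeping needed to certify that the outcome is measurable with respect to the random $\sigma$-field $\mathcal{H}$ rather than merely with respect to $\sigma(A^{(i)})\vee\sigma(B^{(j)})$. This last part only uses monotonicity of the spectrum and the defining property of stopping-set $\sigma$-fields, but it is the place where it is easiest to be sloppy.
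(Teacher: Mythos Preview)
Your proposal is correct and follows essentially the same route as the paper: partition $\Omega$ by the finitely many joint values of $(\Ao^n,\overline{\Bo^c}^n)$, use $\Bo\subset\Ao$ to get $(B^{(j)})^c\subset A^{(i)}$ on each nonempty piece, and invoke the deterministic Markov property $(3^*)$ there. The only organizational difference is that the paper conditions in the other direction (computing $\mathbb{P}(\Gamma\cap\{\overline{\Bo^c}^n=B_k\}\mid\sigma(\Ao^n))$ for generators $\Gamma\in\sigma(B_k)$) and lands in the auxiliary $\tilde{\sigma}(\Ao^n\cap\overline{\Bo^c}^n)$ before citing Lemma~\ref{lemmaAB} and Corollary~\ref{cordisc2}, whereas you verify $\mathcal H$-measurability by hand; the content is the same.
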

\begin{proof}
Set \(\Ao^n=\sum_{i=1}^\infty {\bf 1}_{\Gamma_i}A_i\) and \(\overline{\Bo^c}^n=\sum_{j=1}^\infty {\bf 1}_{\Sigma_j}B_j\) for \(\Gamma_i, \Sigma_j \in \mathscr{M}\) and closed sets \(\{A_i\}, \{B_j\}\) satisfying \(A_i\not =A_j, B_i\not = B_j\) for \(i\not =j\).

For \(\Gamma \in \sigma(B_k),\) we have
\begin{eqnarray}
\mathbb{P}(\Gamma \cap \{\overline{\Bo^c}^n\!=\!B_k\}\ \! |\!\ \sigma(\Ao^n))=\sum_{i=1}^{\infty}\mathbb{P}(\Gamma \cap \{\overline{\Bo^c}^n\!=\!B_k\}\ \! |\!\ \sigma(A_i)\! \vee  \!\{\Ao^n\! = \! A_i\}){\bf 1}_{\{\Ao^n=A_i\}}\nonumber \\
=\sum_{i=1}^{\infty}\mathbb{P}(\Gamma \cap \{\overline{\Bo^c}^n=B_k\}\ |\ \sigma(A_i)){\bf 1}_{\{\Ao^n=A_i\}}\hspace{16mm}\nonumber \\
=\sum_{i,j=1}^{\infty}\mathbb{P}(\Gamma \cap \{\overline{\Bo^c}^n=B_k\}\ |\ \sigma(A_i)){\bf 1}_{\{\Ao^n=A_i, \overline{\Bo^c}^n=B_j\}}\hspace{2mm}\nonumber \label{eq:indn1}
\end{eqnarray}
Since \(\{\Ao^n=A_i, \overline{\Bo^c}^n=B_j\}\in \sigma(A_i\cap B_j)\subset \sigma(A_i)\), we have 
\begin{equation*}\label{eq:indn2}
\mathbb{P}(\Gamma \cap \{\overline{\Bo^c}^n=B_k\}\ |\ \sigma(\Ao^n))=\sum_{i=1}^{\infty}\mathbb{P}(\Gamma \cap \{\overline{\Bo^c}^n=B_k\}\ |\ \sigma(A_i)){\bf 1}_{\{\Ao^n=A_i, \overline{\Bo^c}^n=B_k\}}.
\end{equation*}

On \(\{\Ao^n=A_i, \overline{\Bo^c}^n=B_k\},\) we have \(B_k^c = {\overline{\Bo^c}^n}^c \subset {\overline{\Bo^c}}^c \subset \Bo \subset \Ao \subset \Ao^n=A_i\). Hence \(\sigma(A_i)\) is conditionally independent of \(\sigma(\overline{{B_k^c}^c})=\sigma(B_k)\) given \(\sigma(A_i\cap B_k)\) by the Markov property. Since \(\Gamma\cap \{\overline{\Bo^c}^n=B_k\} \in \sigma(B_k),\) it holds that
\begin{eqnarray*}
\mathbb{P}(\Gamma \! \cap \!\{\overline{\Bo^c}^n\! =\!B_k\}\ \! |\!\ \sigma(\Ao^n))=\sum_{i=1}^{\infty}\mathbb{P}(\Gamma \cap \{\overline{\Bo^c}^n\! =\!B_k\}\ \! |\!\ \sigma(A_i\cap B_k)){\bf 1}_{\{\Ao^n=A_i, \overline{\Bo^c}^n=B_k\}}\\
=\sum_{i,j=1}^{\infty}\mathbb{P}(\Gamma \cap \{\overline{\Bo^c}^n=B_k\}\ |\ \sigma(A_i\cap B_k)){\bf 1}_{\{\Ao^n=A_i, \overline{\Bo^c}^n=B_k, \overline{\Bo^c}^n=B_j\}}\\
=\sum_{i,j=1}^{\infty}\mathbb{P}(\Gamma \cap \{\overline{\Bo^c}^n=B_k\}\ |\ \sigma(A_i\cap B_j)){\bf 1}_{\{\Ao^n=A_i, \overline{\Bo^c}^n=B_j\}}\hspace{12.5mm}\\
=\mathbb{P}\left( \Gamma\cap \{\overline{\Bo^c}^n=B_k\}\ |\ \tilde{\sigma}(\Ao^n\cap \overline{\Bo^c}^n) \right).\hspace{39.5mm}
\end{eqnarray*}
where \(\tilde{\sigma}(\Ao^n\cap \overline{\Bo^c}^n):=\bigvee_{A,B:{\rm closed}}\sigma(\Gamma\cap \{\Ao^n=A, \overline{\Bo^c}^n=B\}\ ;\ \Gamma\in \sigma(A\cap B))\). By Lemma \ref{lemmaAB}, we have
\[\mathbb{P}(\Gamma \cap \{\overline{\Bo^c}^n=B_k\}\ |\ \sigma(\Ao^n))=\mathbb{P}( \Gamma\cap \{\overline{\Bo^c}^n=B_k\}\ |\ \sigma(\Ao^n\cap \overline{\Bo^c}^n)).\]
Combining this with Corollary \ref{cordisc2}, the proof is completed.
\end{proof}

\begin{proof}[Proof of Theorem \ref{mainstrong}]
It follows from Proposition \ref{indpendentn} and Lemma \ref{cordec}.
\end{proof}

\section{Examples}
In this section, we give some examples.

\begin{example}
Let us consider the closed ball \(E:=\{x\in \mathbb{R}^2\ ;\ |x|\leq 1\}\) and the Lebesgue measure \(m\) on \(E\).
We define the irreducible regular Dirichlet form \((\cE,\cF)\) on \(L^2(E;m)\) as 
\begin{eqnarray*}
\left\{ \begin{split} \cF:=H^1(E):= \left\{f\in L^2(E;m)\ ;\ \nabla f(x)\in L^2(E;m) \right\},\\
\cE(f,g):=\frac{1}{2}\int_{E} \nabla f(x)\cdot \nabla g(x)dm(x) \ \ {\rm for\ } f,g\in \cF, \hspace{8mm} \end{split}\right.
\end{eqnarray*}
and \(\G(\cE)\) be the Gaussian field associated with \((\cE,\cF)\). We may treat \(\cF\) as the family of quasi-continuous versions of the functions belonging to \(H^1(E)\).
We can identify \(\partial E\) with \([0,2\pi)\) by considering \(x=(\cos (\theta), \sin (\theta)) \in \partial E\). Let \(\check{m}\) be the uniform measure on \(\partial E\) and set \(\bH f:=\bH_{\partial E} f\) for \(f\in \cF.\) We define the irreducible regular Dirichlet form \((\check{\cE},\check{\cF})\) on \(L^2(\partial E; \check{m})\) as
\begin{eqnarray*}
\left \{ \begin{split}\check{\cF}:=\cF|_{\partial E} \cap L^2(\partial E;\check{m}), \hspace{12mm} \\ 
\check{\cE}(f,g):=\cE(\bH f,\bH g)\  {\rm for}\ f,g\in \check{\cF}. \end{split}\right.
\end{eqnarray*}
Then, by \cite[Section 5.3]{CF}, it holds that 

\begin{eqnarray*}
\left\{ \begin{split} \check{\cF}= \left\{\varphi \in L^2(\partial E;\check{m})\ ;\ \check{\cE}(\varphi,\varphi)<\infty \right\}, \hspace{30mm}\\
\check{\cE}(\varphi,\varphi)=\frac{1}{2}\int_0^{2\pi}\int_0^{2\pi} \frac{(\varphi(x)-\varphi(y))^2}{4\pi (1-\cos (x-y))} d\check{m}(x)d\check{m}(y) \ \ {\rm for\ } \varphi \in \check{\cF}. \end{split}\right.
\end{eqnarray*}
Remark that \((\cE,\cF)\) associates with the reflecting Brownian motion on \(E\) and \((\check{\cE}, \check{\cF})\) associates with the time-changed process of the reflecting Brownian motion by \(\check{m}\), which is the symmetric \(1\)-stable process on \(\partial E\).
We set \(\check{\G}(\cE):=\{X_{\bH f}\in \G(\cE)\ ;\ f\in \cF\}\) and \(\check{\sigma} (A)\) be the \(\sigma\)-field replaced \((\cE,\cF)\) by \((\check{\cE},\check{\cF})\) in the definition of \(\sigma(A)\).

Let \(A:=\{x=(x_1,x_2)\in E\ ;\ x_2\geq 0\}\). Since \((\cE,\cF)\) is local, \(\sigma(A)\) is conditionally independent of \(\sigma(\overline{A^c})\) given \(\sigma(\partial A)\) by Theorem \ref{mainmain}. However, \(\check{\sigma} (A\cap \partial E)\) is not conditionally independent of \(\check{\sigma} (\overline{A^c}\cap \partial E)\) given \(\check{\sigma}(\{0,\pi\})\) by checking that (\ref{eq:Markov1}) fails. Since \((\check{\cE},\check{\cF})\) is not local, this result is consistent to Theorem \ref{mainmain}. In this case, it holds that \(\check{\sigma} (A\cap \partial E) \subset \sigma(A)\) and \(\check{\sigma} (\overline{A^c}\cap \partial E)\subset \sigma(\overline{A^c})\), but \(\check{\sigma}(\{0,\pi\})\) is too small amount of information to make them conditionally independent.

\end{example}

\begin{example}
Let \(M\) be a continuous semimartingale on a filtered probability space \((\Omega, \mathscr{M}, \{\mathscr{F}_t\}_{t\geq 0} , \mathbb{P})\), which is the sum of a continuous local martingale and a c\`{a}dl\`{a}g adapted process with locally bounded variation. Assume that the quadratic variation \(\langle M \rangle_{\cdot}\) of \(M\) is deterministic and absolutely continuous with respect to the Lebesgue measure. We write \(d\langle M \rangle _s=h(s)ds\) for some positive function \(h\). We set
\begin{eqnarray*}
\left\{ \begin{split} \cF:=\{f:\mathbb{R}\to \mathbb{R}\ ;\ \mathbb{E}(\int_0^{\infty}|f(s)|^2d\langle M \rangle_s)<\infty\},\hspace{30mm}\\
 \cE(f,g):=\mathbb{E}(\int_0^{\infty}f(s)g(s)d\langle M \rangle_s)=\int _0^{\infty} f(s)g(s)h(s)ds\ {\rm for}\ f,g\in \cF, \end{split}\right. \end{eqnarray*}
  and \(X_f\) denotes \(\int_0^{\infty} f(s)d M _s\). Then \((\cE,\cF)\) is an irreducible local regular Dirichlet form on \(L^2([0,\infty))\). Fix \(t>0\), then it holds that \(\sigma(\{t\})=\{\Omega, \phi\}\vee \sigma(\mathscr{N})\). Indeed, for \(f\) satisfying \(s(f)=\{t\}\), we have \(\mathbb{E}(\int_0^{\infty}f(s)g(s)d\langle M \rangle_s)=0\) for \(g\in C_c(\{t\}^c)\). By the fundamental lemma of calculus of variations, we have \(f(s)h(s)=0\) for almost every \(s\). Since \(h\) is positive, we have \(f(s)=0\).

By Theorem \ref{mainmain}, \(\sigma([0,t])\) is conditionally independent of \(\sigma([t, \infty))\) given \(\sigma(\{t\})=\{\Omega, \phi\}\vee \sigma(\mathscr{N})\). Since \(\{\Omega, \phi\}\vee \sigma(\mathscr{N})\) is independent of any set, this conditional independence is independence. Combining this with the fact that \(M_u=\int {\bf 1}_{[0,u]}(s)dM_s \in \sigma([0,t])\) for \(u\leq t\) and \(M_{t+r}-M_t=\int {\bf 1}_{[t,t+r]}(s)dM_s \in \sigma([t,\infty))\) for \(r\geq 0\), \(M\) has independent increments.

\end{example}

\begin{example}\label{exampleBM} In this example, we show that the Markov property for Gaussian fields yields that for some processes induced by the field and Dirac measures.
Let \(E\) be the half line \((0,\infty)\), \(m\) be the positive Radon measure with \({\rm supp}(m)=E\). Let \((\cE,\cF)\) be an irreducible transient local regular Dirichlet form on \(L^2(E;m)\) and \(\G(\cE)\) be a Gaussian field associated with \((\cE,\cF).\) Let \(Z\) be the Hunt process associated with \((\cE,\cF)\) on \(L^2(E;m)\) and suppose \(Z\) admits no killing inside.
For any \(t>0,\) we write \(\delta_t\) the Dirac measure at \(t\). Assume that, for almost all \(t>0\), \(\delta_t\) is a measure of finite energy integral, which is the condition that there exists \(C>0\) such that \(\int_E |g(x)|d\delta_t(x)\leq C\sqrt{\cE(g,g)}\) for any \(g\in \cF \cap C_c(E).\)  By \cite[p.87] {CF}, there exists \(U\delta_t \in \cF\) such that 
\begin{equation}\cE(U\delta_t, g)=\int_E \tilde{g}(x)d\delta_t(x) \label{eq:pote} \end{equation}
 for any \(g\in \cF_e\) and its quasi-continuous version \(\tilde{g}\). We have \(s(U\delta_t)=\{t\}\) because \(\cE(U\delta_t,g)=g(t)\).

Noting that \((\cE,\cF)\) is local, we see that \(\sigma((0,t])\) is conditionally independent of \(\sigma([t,\infty ))\) given \(\sigma (\{t\})\) by Theorem \ref{mainmain}. Since \(s(U\delta_t)=\{t\}\), \(X_{U\delta_{s}}\) is \(\sigma((0,t])\)-measurable for \(0<s\leq t\) and \(X_{U\delta_{t+r}}\) is \(\sigma([t,\infty ))\)-measurable for \(r\geq 0.\) 
\begin{proposition}
It holds that \(\sigma(\{t\})=\sigma(X_{U\delta_{t}})\vee \sigma(\mathscr{N})\).
\end{proposition}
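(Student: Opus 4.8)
The plan is to prove the two inclusions separately; the substantive one is that every $X_f$ with $s(f)\subset\{t\}$ is, up to a $\mathbb P$-null set, a deterministic multiple of $X_{U\delta_t}$. The inclusion $\sigma(X_{U\delta_t})\vee\sigma(\mathscr N)\subset\sigma(\{t\})$ is immediate: $U\delta_t\in\cF\subset\cF_e$ and $s(U\delta_t)=\{t\}$, so $X_{U\delta_t}$ is one of the generators of $\sigma(\{t\})$, while $\sigma(\mathscr N)\subset\sigma(\{t\})$ holds by definition. For the reverse inclusion, I would use that $\sigma(\{t\})$ is generated by $\mathscr N$ together with $\{X_f:f\in\cF_e,\ s(f)\subset\{t\}\}$, so it suffices to produce, for each such $f$, a constant $c_f\in\mathbb R$ with $X_f=c_fX_{U\delta_t}$ $\mathbb P$-a.s.; then $X_f$ agrees off a $\mathbb P$-null set with the $\sigma(X_{U\delta_t})$-measurable variable $c_fX_{U\delta_t}$ and is therefore $\bigl(\sigma(X_{U\delta_t})\vee\sigma(\mathscr N)\bigr)$-measurable.

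The heart of the argument is the identity $\{h\in\cF_e:s(h)\subset\{t\}\}=\mathbb R\cdot U\delta_t$, which I would establish as follows. Since $\delta_t$ is of finite energy integral it charges no polar set, so $\{t\}$ is non-polar; hence $\tilde h(t)$ is well defined for every $h\in\cF_e$ and $\cF_e^{\{t\}^c}=\{h\in\cF_e:\tilde h(t)=0\}$. Because $(\cE,\cF)$ is transient, $(\cF_e,\cE)$ is a Hilbert space; put $\mathcal H:=(\cF_e^{\{t\}^c})^{\perp}$, so $\cF_e=\cF_e^{\{t\}^c}\oplus\mathcal H$. I claim $\mathcal H=\{h\in\cF_e:s(h)\subset\{t\}\}$: if $h\in\mathcal H$ then $\cE(h,g)=0$ for all $g\in C_c(\{t\}^c)\cap\cF\subset\cF_e^{\{t\}^c}$, so $s(h)\subset\{t\}$ by \cite[Theorem 2.3.3]{FOT}; conversely, if $s(h)\subset\{t\}$ then the same theorem gives $\cE(h,g)=0$ for $g\in C_c(\{t\}^c)\cap\cF$, and since $C_c(\{t\}^c)\cap\cF$ is $\cE$-dense in $\cF_e^{\{t\}^c}$ — by the regularity of the part Dirichlet form $(\cE,\cF^{\{t\}^c})$ on $L^2(\{t\}^c;m)$ together with \cite[Theorem 3.4.9]{CF} — one gets $\cE(h,g)=0$ for all $g\in\cF_e^{\{t\}^c}$, i.e. $h\in\mathcal H$. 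Finally, the linear map $\mathcal H\ni h\mapsto\tilde h(t)$ has kernel $\mathcal H\cap\cF_e^{\{t\}^c}=\{0\}$, so $\dim\mathcal H\le1$; and $U\delta_t\in\mathcal H$ is nonzero (its spectrum $\{t\}$ is nonempty, whereas $s(0)=\emptyset$), so $\mathcal H=\mathbb R\cdot U\delta_t$.

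With the identity in hand, any $f\in\cF_e$ with $s(f)\subset\{t\}$ satisfies $f=c_fU\delta_t$ in $\cF_e$ with $c_f=\tilde f(t)/\widetilde{U\delta_t}(t)$, where $\widetilde{U\delta_t}(t)=\cE(U\delta_t,U\delta_t)>0$ by \eqref{eq:pote} and $U\delta_t\neq0$; hence $\E\bigl[(X_f-c_fX_{U\delta_t})^2\bigr]=\cE(f-c_fU\delta_t,\,f-c_fU\delta_t)=0$, so $X_f=c_fX_{U\delta_t}$ $\mathbb P$-a.s., as needed. I expect the main obstacle to be the two density/identification facts used to pin down $\mathcal H$ — that $C_c(\{t\}^c)\cap\cF$ is $\cE$-dense in $\cF_e^{\{t\}^c}$ and that $\cF_e^{\{t\}^c}=\{\tilde h(t)=0\}$ — which together amount to the assertion that a single non-polar point supports only a one-dimensional space of functions with spectrum contained in it; the remaining steps are routine. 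Alternatively, the last step can be run probabilistically: $s(h)\subset\{t\}$ forces $h=\bH_{\{t\}}\tilde h$ q.e., and since $Z$ has no killing inside one has $\bH_{\{t\}}\tilde h=\tilde h(t)\,\mathbb P_{\cdot}(\tau_{\{t\}}<\zeta)$ — the same fixed function for all such $h$, up to the scalar $\tilde h(t)$ — again giving $h\in\mathbb R\cdot U\delta_t$.
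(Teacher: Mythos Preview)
Your argument is correct. Both you and the paper reduce the nontrivial inclusion to the one-dimensionality statement $\{h\in\cF_e:s(h)\subset\{t\}\}=\mathbb R\cdot U\delta_t$, but the technical route differs. The paper invokes the theory of one-point extensions \cite[Theorem 7.5.4]{CF}: with $E^0:=E\setminus\{t\}$ and $\varphi(x):=\mathbb P^0_x(\zeta^0<\infty,\,Z^0_{\zeta^0-}=t)$, it uses the decomposition $\cF_e=\cF_e^0+\mathbb R\varphi$ and then computes $\cE(f,g)$ for $s(f)=\{t\}$ by splitting $g=g^0+c^0\varphi$, arriving at $\cE(f,g)=\tfrac{\cE(f,\varphi)}{\varphi(t)}\,g(t)$ and hence $f=cU\delta_t$. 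Your proof avoids this structural theorem by exploiting transience directly: $(\cF_e,\cE)$ is a Hilbert space, $\cF_e^{\{t\}^c}$ is closed (indeed the kernel of the bounded functional $h\mapsto\tilde h(t)=\cE(U\delta_t,h)$), and its orthogonal complement $\mathcal H$ coincides with $\{s(h)\subset\{t\}\}$ and has dimension at most one via the injective evaluation at $t$. This is more elementary and self-contained, needing only regularity of the part form and \cite[Theorem 3.4.9]{CF} for the density step, whereas the paper's approach packages the same decomposition into a citation but makes the role of the hitting probability $\varphi$ explicit. Your closing probabilistic alternative --- writing $h=\bH_{\{t\}}\tilde h=\tilde h(t)\,\mathbb P_\cdot(\tau_{\{t\}}<\zeta)$ --- is essentially the paper's argument with $\varphi$ identified as that hitting probability.
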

\begin{proof}
Recall that, by the definition, \(\sigma(\{t\})\) contains \(\mathscr{N}\). By \((\ref{eq:pote})\), we have \(\sigma(\{t\})\supset \sigma(X_{U\delta_{t}})\vee \sigma(\mathscr{N})\).

We next prove \(\sigma(\{t\})\subset \sigma(X_{U\delta_{t}})\vee \sigma(\mathscr{N})\) by using the method of one-point extensions in \cite[section 7]{CF}. Let \(E^0:=E\setminus \{t\}\), \(Z^0\) be the part process of \(Z\) on \(E^0\) and \((\cE^0,\cF^0)\) be a Dirichlet form on \(L^2(E^0;m)\) associated with \(Z^0\). Then \(Z^0\) admits no killing inside and \(\varphi(x):=\mathbb{P}^0_x(\zeta^0<\infty, Z^0_{\zeta^0-}=t)\) is positive for any \(x\in E^0\), where \(\mathbb{P}^0_x\) is a distribution of \(Z^0\) starting at \(x\) and \(\zeta^0\) is a lifetime of \(Z^0.\) By \cite[Theorem 7.5.4]{CF},  \(\cF_e\) is spanned by \(\cF^0_e\) and \(\varphi.\)

Take \(f\in \cF_e\) with \(s(f)=\{t\}\). For \(g\in \cF_e\), there exists \(g^0\in \cF_e^0\) and a constant \(c^0\) such that \(g=g^0+c^0\varphi\), and we have 
\begin{eqnarray*}
\cE(f,g) &=& \cE(f,g^0)+\cE(f,c^0\varphi)\ =\ \cE(f,c^0\varphi)\\
&=&\frac{\cE(f,\varphi)}{\varphi(t)}c^0\varphi(t)\ =\ \frac{\cE(f,\varphi)}{\varphi(t)}g(t).
\end{eqnarray*}
Hence we have \(f=cU\delta_{t}\) q.e. for \(c:=\cE(f,\varphi)/\varphi(t)\), and \(\sigma({t})\subset \sigma(\{X_{cU\delta_{t}}\}_{c\in \mathbb{R}})\) holds. For \(c \in \mathbb{R}\) and \(u,v \in \cF_e\), we have \(\E((X_{cu}-cX_u)X_v))=0\) and so \(X_{cu}=cX_u\) a.s. Thus it holds that \(\sigma(\{X_{cU\delta_{t}}\}_{c\in \mathbb{R}})=\sigma(X_{U\delta_{t}})\) and \(\sigma(\{t\})\subset \sigma(X_{U\delta_{t}})\vee \sigma(\mathscr{N})\).
\end{proof}
Thus \(X_{U\delta_{s}}\) is conditionally independent of \(X_{U\delta_{t+r}}\) given \(\sigma(X_{U\delta_{t}})\) for \(0<s\leq t\) and \(r\geq 0\). This is the Markov property for \(\{X_{U\delta_t}\}_{t>0}\) as a stochastic process.
\end{example}

\begin{remark}
Consider the following special case in Example \ref{exampleBM};
\begin{eqnarray*}
\left\{ \begin{split} \cF:=H_0^1(E):= \left\{f\in L_{\rm loc}^2(\mathbb{R})\ ;\ \frac{d f}{d x}\in L^2(\mathbb{R}), f=0 \ {\rm q.e.\  on\ } \mathbb{R}\setminus E\right\},\\
\cE(f,g):=\frac{1}{2}\int_{\mathbb{R}} \frac{df}{dx}(x) \frac{dg}{dx}(x) dx \ \ {\rm for\ } f,g\in \cF. \hspace{35mm} \end{split}\right.
\end{eqnarray*}
By \cite[Proposition 2.13]{S}, \(\G(\cE)\) is the Gaussian free field on \(E=(0,\infty)\subset \mathbb{R}\).
By some standard computation, we have \(\cE(U\delta_{t/2},U\delta_{s/2})=t\wedge s\) and there exists a modification \(W\) of \(\{X_{U\delta_{t/2}}\}_{t>0}\) having continuous paths, which has the same law as that of Brownian motion.
\end{remark}

\begin{remark}
We can consider the multidimensional version of the state space of Example \ref{exampleBM}. Let \(E\subset \mathbb{R}^2\) be a domain, \(m\) be a positive Radon measure with \({\rm supp}(m)=E\), \((\cE,\cF)\) be an irreducible regular transient local Dirichlet form on \(L^2(E;m)\), \(Z\) be the Hunt process associated with \((\cE,\cF)\), and \(\G(\cE)\) be a Gaussian field associated with \((\cE,\cF)\). 
Fix \(z\in E\) and a continuous decreasing function \(r:(0,\infty)\to (0,\infty)\). \(\mu_{t}\) denotes a uniform measure on the circle of radius \(r(t)\) around \(z\), and assume that \(\mu_t\) is a measure of finite energy integral for all \(t>t_0:=r^{-1}({\rm dist}(z,\partial E))\). For fixed \(t,\) by Theorem \ref{mainmain}, \(\sigma(A)\) is conditionally independent of \(\sigma(\overline{A^c})\) given \(\sigma(\partial A)\) where \(A\) is the closed ball of radius \(r(t)\) around \(z\). It is easy to check that \(X_{U\mu_{s}}\) is \(\sigma(\overline{A^c})\)-measurable and \(X_{U\mu_{t+r}}\) is \(\sigma(A)\)-measurable for \(t_0 \leq s\leq t\) and \(r\geq 0.\) However, \(\sigma(\partial A)\) is not equal to \(\sigma(X_{U\mu_{t}})\) because it holds that \(\sigma(\partial A)\supsetneq \sigma(X_{U\mu})\) for \(\mu\), a non-uniform measure on the circle of radius \(r(t)\) around \(z\). Therefore we can not obtain the Markov property for \(\{X_{U\mu_t}\}_{t>t_0}\) directly from that of \(\G(\cE)\) in the multidimensional case by this way. However there is a case that \(\{X_{U\mu_t}\}_{t>t_0}\) has the Markov property. For example, for GFF on a domain of \(\mathbb{R}^2\) and \(r(t)=\exp{(-t)},\) the continuous modification of \(\{X_{U\mu_t}\}_{t>t_0}\) has the same law as Brownian motion (\cite[Theorem 1.35]{BP}).
\end{remark}

\section*{Acknowledgement}
I would like to thank Professor Takashi Kumagai for helpful discussions and comments. I appreciate Professor David A. Croydon and Professor Naotaka Kajino for checking the introduction of this paper. I also thank to Professor Masatoshi Fukushima for useful comments on how the Markov property is used (Remark \ref{Fukurem}). This work was supported by JSPS KAKENHI Grant Number JP21J20251.

\end{document}